\newtheorem{lemma}{Lemma}
\newtheorem{theorem}{Theorem}
\newtheorem{definition}{Definition}
\newtheorem{assumption}{Assumption}
\newcommand{\Exp}{\mathrm{Exp}}
\newcommand\numberthis{\addtocounter{equation}{1}\tag{\theequation}}
\newcommand{\ragd}{\textsc{Ragd}}
\renewcommand{\nabla}{\mathrm{grad}}
\title{Towards Riemannian Accelerated Gradient Methods}
\date{}
 \author{
 	Hongyi Zhang \\ BCS \& LIDS, MIT \\ \href{mailto:hongyiz@mit.edu}{hongyiz@mit.edu} \and Suvrit Sra \\ EECS \& LIDS, MIT \\ \href{mailto:suvrit@mit.edu}{suvrit@mit.edu}
 }
\begin{document}

\maketitle

\begin{abstract}
We propose a Riemannian version of Nesterov's Accelerated Gradient algorithm (\ragd), and show that for \emph{geodesically} smooth and strongly convex problems, within a neighborhood of the minimizer whose radius depends on the condition number as well as the sectional curvature of the manifold, \ragd{} converges to the minimizer with acceleration. Unlike the algorithm in \citep{liu2017accelerated} that requires the exact solution to a nonlinear equation which in turn may be intractable, our algorithm is constructive and computationally tractable\footnote{ as long as Riemannian gradient, exponential map and its inverse are computationally tractable, which is the case for many matrix manifolds \citep{absil2009optimization}.}. Our proof exploits a new estimate sequence and a novel bound on the nonlinear metric distortion, both ideas may be of independent interest.
\end{abstract}

\section{Introduction}

Convex optimization theory has been a fruitful area of research for decades, with classic work such as the ellipsoid algorithm \citep{khachiyan1980polynomial} and the interior point methods \citep{karmarkar1984new}. However, with the rise of machine learning and data science, growing problem sizes have shifted the community's focus to first-order methods such as gradient descent and stochastic gradient descent.  Over the years, impressive theoretical progress has also been made here, helping elucidate problem characteristics and bringing
insights that drive the discovery of provably faster algorithms, notably Nesterov's accelerated gradient descent~ \citep{nesterov1983method} and variance reduced incremental gradient methods \citep[e.g.,][]{johnson2013accelerating,schmidt2013minimizing,defazio2014saga}.

Outside convex optimization, however, despite some recent progress on nonconvex optimization our theoretical understanding remains limited. Nonetheless, nonconvexity pervades machine learning applications and  motivates identification and study of specialized structure that enables sharper theoretical analysis, e.g., optimality bounds, global complexity, or faster algorithms. Some examples include, problems with low-rank structure~\citep{boumal2016non,ge2017no,sun2017complete,kawaguchi2016deep}; local convergence rates~\citep{ghadimi2013stochastic,Reddi16,Agarwal16,Yair16}; growth conditions that enable fast convergence~\citep{Polyak1963,zhang2016riemannian,attouch2013convergence,shamir2015stochastic}; and nonlinear constraints based on Riemannian manifolds~\citep{boumal2016global,zhang2016first,zhang2016riemannian,mishra2016}, or more general metric spaces~\citep{ambrosio2014metric,bacak2014convex}.


In this paper, we focus on nonconvexity from a Riemannian viewpoint and consider gradient based optimization. In particular, we are motivated by Nesterov's accelerated gradient method~\citep{nesterov1983method}, a landmark result in the theory of first-order optimization. By introducing an ingenious ``estimate sequence'' technique, \citet{nesterov1983method} devised a first-order algorithm that provably outperforms gradient descent, and is \emph{optimal} (in a first-order oracle model) up to constant factors. This result bridges the gap between the lower and upper complexity bounds in smooth first-order convex optimization~\citep{nemirovsky1983problem, nesterov2004introductory}. 

Following this seminal work, other researchers also developed different analyses to explain the phenomenon of  acceleration. However, both the original proof of Nesterov and all other existing analyses rely heavily on the linear structure of vector spaces. Therefore, our central question is: 
\begin{center}
  \emph{Is linear space structure necessary to achieve acceleration?}
\end{center}
Given that the iteration complexity theory of gradient descent generalizes to Riemannian manifolds~\citep{zhang2016first}, it is tempting to hypothesize that a Riemannian generalization of accelerated gradient methods also works. However, the nonlinear nature of Riemannian geometry poses significant obstructions to either verify or refute such a hypothesis. The aim of this paper is to study existence of accelerated gradient methods on Riemannian manifolds, while identifying and tackling key obstructions and obtaining new tools for global analysis of optimization on Riemannian manifolds as a byproduct.

It is important to note that in a recent work \citep{liu2017accelerated}, the authors claimed to have developed Nesterov-style methods on Riemannian manifolds and analyzed their convergence rates. Unfortunately, this is \emph{not} the case, since their algorithm requires the \emph{exact} solution to a nonlinear equation \cite[(4) and (5)]{liu2017accelerated} on the manifold at every iteration. In fact, solving this nonlinear equation itself can be as difficult as solving the original optimization problem.

\subsection{Related work}
The first accelerated gradient method in vector space along with the concept of estimate sequence is proposed by \citet{nesterov1983method}; \citep[Chapter 2.2.1]{nesterov2004introductory} contains an expository introduction. In recent years, there has been a surging interest to either develop new analysis for Nesterov's algorithm or invent new accelerated gradient methods. In particular, \citet{su2014differential,flammarion2015averaging,wibisono2016variational} take a dynamical system viewpoint, modeling the continuous time limit of Nesterov's algorithm as a second-order ordinary differential equation. \citet{allen2014linear} reinterpret Nesterov's algorithm as the linear coupling of a gradient step and a mirror descent step, which also leads to accelerated gradient methods for smoothness defined with non-Euclidean norms. \citet{arjevani2015lower} reinvent Nesterov's algorithm by considering optimal methods for optimizing polynomials. \citet{bubeck2015geometric} develop an alternative accelerated method with a geometric explanation. \citet{lessard2016analysis} use theory from robust control to derive convergence rates for Nesterov's algorithm.


The design and analysis of Riemannian optimization algorithms as well as some historical perspectives were covered in details in \citep{absil2009optimization}, although the analysis only focused on local convergence. The first global convergence result was derived in \citep{udriste1994convex} under the assumption that the Riemannian Hessian is positive definite. \citet{zhang2016first} established the globally convergence rate of Riemannian gradient descent algorithm for optimizing geodesically convex functions on Riemannian manifolds. Other nonlocal analyses of Riemannian optimization algorithms include stochastic gradient algorithm \citep{zhang2016first}, fast incremental algorithm \citep{zhang2016riemannian, kasai2016riemannian}, proximal point algorithm \citep{ferreira2002proximal} and trust-region algorithm \citep{boumal2016global}. \citet[Chapter 2]{absil2009optimization} also surveyed some important applications of Riemannian optimization.




\subsection{Summary of results}
In this paper, we make the following contributions:
\begin{enumerate}
	\item We propose the first \emph{computationally tractable} accelerated gradient algorithm that, within a radius from the minimizer that depends on the condition number and sectional curvature bounds, is provably faster than gradient descent methods on Riemannian manifolds with bounded sectional curvatures. (Algorithm \ref{alg:constant-step}, Theorem \ref{thm:convergence-induction})
	\item We analyze the convergence of this algorithm using a new estimate sequence, which relaxes Nesterov's original assumption and also generalizes to Riemannian optimization. (Lemma \ref{thm:estimate-sequence-lemma})
	\item We develop a novel bound related to the bi-Lipschitz property of exponential maps on Riemannian manifolds. This fundamental geometric result is essential for our convergence analysis, but should also have other interesting applications. (Theorem \ref{thm:squared-distance-ratio-bound}) 
\end{enumerate}


\section{Background}

We briefly review concepts in Riemannian geometry that are related to our analysis; for a thorough introduction one standard text is~\citep[e.g.][]{jost2011riemannian}. A \emph{Riemannian manifold} $(\mathcal{M}, \mathfrak{g})$ is a real smooth manifold $\mathcal{M}$ equipped with a Riemannain metric $\mathfrak{g}$. The metric $\mathfrak{g}$ induces an inner product structure on each tangent space $T_x\mathcal{M}$ associated with every $x\in\mathcal{M}$.  We denote the inner product of $u,v\in T_x\mathcal{M}$ as $\langle u, v \rangle \triangleq \mathfrak{g}_x(u,v)$; and the norm of $u\in T_x\mathcal{M}$ is defined as $\|u\|_x \triangleq \sqrt{\mathfrak{g}_x(u,u)}$; we omit the index $x$ for brevity wherever it is obvious from the context. The angle between $u,v$ is defined as $\arccos\frac{\langle u, v \rangle}{\|u\|\|v\|}$. A geodesic is a constant speed curve $\gamma: [0,1]\to\mathcal{M}$ that is locally distance minimizing. An exponential map $\Exp_x:T_x\mathcal{M}\to\mathcal{M}$ maps $v$ in $T_x\mathcal{M}$ to $y$ on $\mathcal{M}$, such that there is a geodesic $\gamma$ with $\gamma(0) = x, \gamma(1) = y$ and $\dot{\gamma}(0) \triangleq \frac{d}{dt}\gamma(0) = v$.  If between any two points in $\mathcal{X}\subset\mathcal{M}$ there is a unique geodesic, the exponential map has an inverse $\Exp_x^{-1}:\mathcal{X}\to T_x\mathcal{M}$ and the geodesic is the unique shortest path with $\|\Exp_x^{-1}(y)\| = \|\Exp_y^{-1}(x)\|$ the geodesic distance between $x,y\in\mathcal{X}$. Parallel transport is the Riemannian analogy of vector translation, induced by the Riemannian metric.

Let $u,v\in T_x\mathcal{M}$ be linearly independent, so that they span a two dimensional subspace of $T_x\mathcal{M}$. Under the exponential map, this subspace is mapped to a two dimensional submanifold of $\mathcal{U}\subset\mathcal{M}$. The sectional curvature $\kappa(x,\mathcal{U})$ is defined as the Gauss curvature of $\mathcal{U}$ at $x$, and is a critical concept in the comparison theorems involving geodesic triangles \citep{burago2001course}.

The notion of geodesically convex sets, geodesically (strongly) convex functions and geodesically smooth functions are defined as straightforward generalizations of the corresponding vector space objects to Riemannian manifolds. In particular,
\begin{itemize}
	\item A set $\mathcal{X}$ is called \emph{geodesically convex} if for any $x,y\in\mathcal{X}$, there is a geodesic $\gamma$ with $\gamma(0) = x, \gamma(1) = y$ and $\gamma(t)\in\mathcal{X}$ for $t\in [0,1]$.
	\item We call a function $f:\mathcal{X}\to\mathbb{R}$ \emph{geodesically convex} (g-convex) if for any $x,y\in\mathcal{X}$ and any geodesic $\gamma$ such that $\gamma(0)=x$, $\gamma(1)=y$ and $\gamma(t)\in\mathcal{X}$ for all $t\in [0,1]$, it holds that
	\[ f(\gamma(t)) \le (1-t)f(x) + tf(y). \]
	It can be shown that if the inverse exponential map is well-defined, an equivalent definition is that for any $x,y\in\mathcal{X}$, $f(y) \ge f(x) + \langle g_x, \Exp_x^{-1}(y) \rangle$,
	where $g_x$ is the gradient of $f$ at $x$ (in this work we assume $f$ is differentiable). A function $f:\mathcal{X}\to\mathbb{R}$ is called \emph{geodesically $\mu$-strongly convex} ($\mu$-strongly g-convex) if for any $x,y\in\mathcal{X}$ and gradient $g_x$, it holds that
	\[ f(y) \ge f(x) + \langle g_x, \Exp_x^{-1}(y) \rangle + \tfrac{\mu}{2}\|\Exp_x^{-1}(y)\|^2.\]
	\item We call a vector field $g :\mathcal{X}\to\mathbb{R}^d$ \emph{geodesically $L$-Lipschitz} ($L$-g-Lipschitz) if for any $x,y\in\mathcal{X}$,
	\[ \|g(x) - \Gamma_y^x g(y)\| \le L \|\Exp_x^{-1}(y)\|, \]
	where $\Gamma_y^x$ is the parallel transport from $y$ to $x$. We call a differentiable function $f:\mathcal{X}\to\mathbb{R}$ \emph{geodesically $L$-smooth} ($L$-g-smooth) if its gradient is $L$-g-Lipschitz, in which case we have
	\[ f(y) \le f(x) + \langle g_x, \Exp_x^{-1}(y) \rangle + \tfrac{L}{2}\|\Exp_x^{-1}(y)\|^2. \]
\end{itemize}
Throughout our analysis, for simplicity, we make the following standing assumptions:
\begin{assumption} \label{assumption:1}
	$\mathcal{X}\subset\mathcal{M}$ is a geodesically convex set where the exponential map $\Exp$ and its inverse $\Exp^{-1}$ are well defined.
\end{assumption}
\begin{assumption} \label{assumption:2}
	The sectional curvature in $\mathcal{X}$ is bounded, i.e. $|\kappa(x,\cdot)|\le K, \forall x\in\mathcal{X}$.
\end{assumption}
\begin{assumption} \label{assumption:3}
	$f$ is geodesically $L$-smooth, $\mu$-strongly convex, and assumes its minimum inside $\mathcal{X}$.
\end{assumption}
\begin{assumption} \label{assumption:4}
	All the iterates remain in $\mathcal{X}$.
\end{assumption}
With these assumptions, the problem being solved can be stated formally as $\min_{x\in\mathcal{X}\subset\mathcal{M}} ~ f(x)$.



\section{Proposed algorithm: \ragd}
\begin{algorithm}[hbtp]
	\caption{Riemannian-Nesterov($x_0, \gamma_0, \{h_k\}_{k=0}^{T-1}, \{\beta_k\}_{k=0}^{T-1}$)} \label{alg:riemannian-ag}
	\SetAlgoLined
	\SetKwInput{KwData}{Parameters}
	\KwData{initial point $x_0\in\mathcal{X}$, $\gamma_0>0$, step sizes $\{h_k\le\frac{1}{L}\}$, shrinkage parameters $\{\beta_k>0\}$}
	initialize $v_0 = x_0$\\
	\For{$k=0,1,\dots,T-1$}{
		Compute $\alpha_k\in(0,1)$ from the equation
		$\alpha_k^2 = h_k\cdot\left((1-\alpha_k)\gamma_k + \alpha_k\mu\right)$\\
		Set $\overline{\gamma}_{k+1} = (1-\alpha_k)\gamma_k + \alpha_k\mu$\\
\nl	\label{ln:y_k}	Choose $y_k = \Exp_{x_k}\left(\frac{\alpha_k\gamma_k}{\gamma_k+\alpha_k\mu}\Exp_{x_k}^{-1}(v_k)\right)$\\
		Compute $f(y_k)$ and $\nabla f(y_k)$\\
\nl	\label{ln:x_k+1}	Set $x_{k+1} = \Exp_{y_k}\left(-h_k\nabla f(y_k)\right)$\label{eq:x-k+1} \\ 
\nl	\label{ln:v_k+1}	Set $v_{k+1} = \Exp_{y_k}\left(\frac{(1-\alpha_k)\gamma_k}{\overline{\gamma}_{k+1}} \Exp_{y_k}^{-1}(v_k) - \frac{\alpha_k}{\overline{\gamma}_{k+1}} \nabla f(y_k)\right)$\label{eq:v-k+1}\\ 
		Set $\gamma_{k+1} = \frac{1}{1+\beta_k}\overline{\gamma}_{k+1}$
	}
	{\bf Output:} $x_T$
\end{algorithm}

\begin{figure}[hbt]
	\centering \def\svgwidth{200pt}
	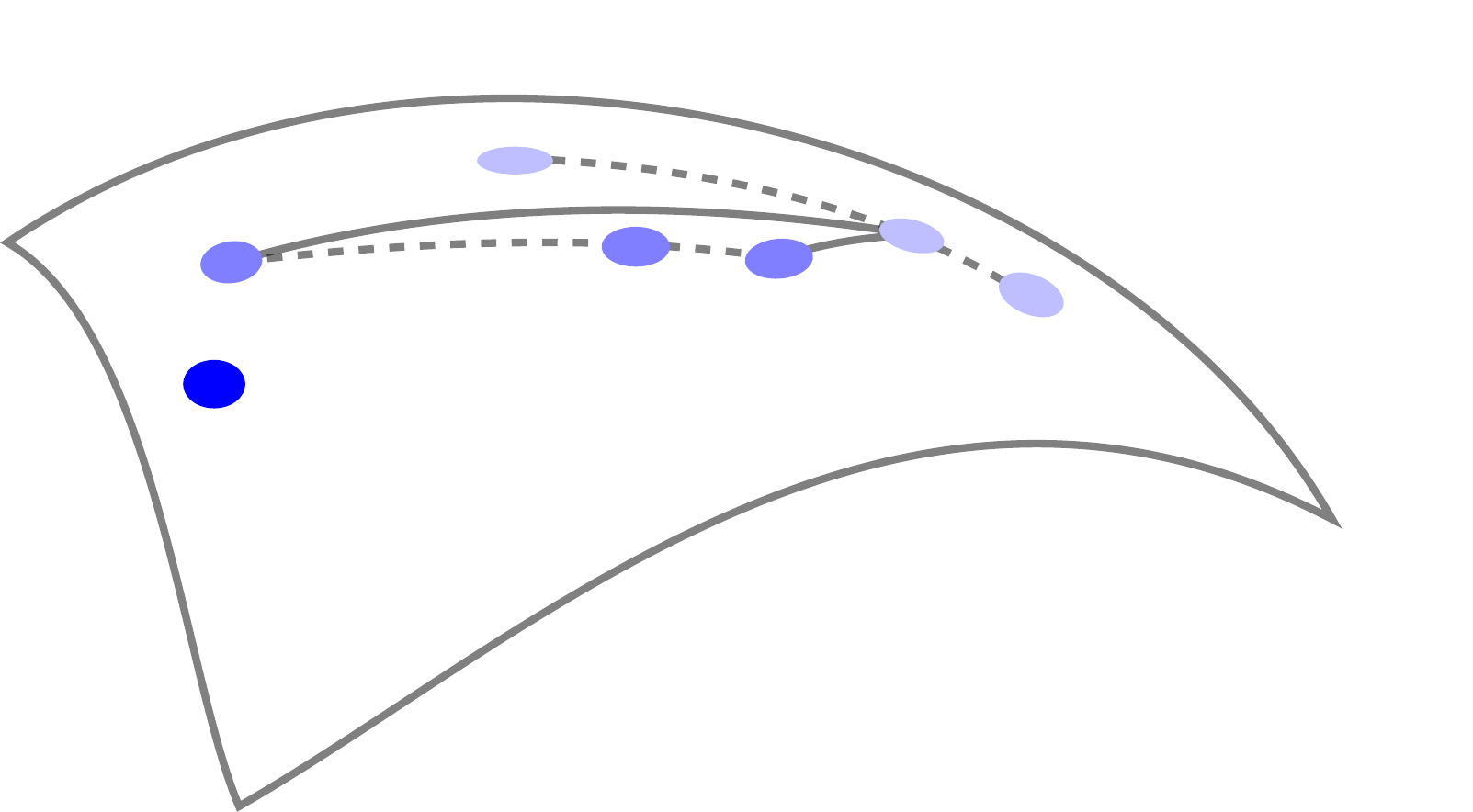 \def\svgwidth{200pt} 
	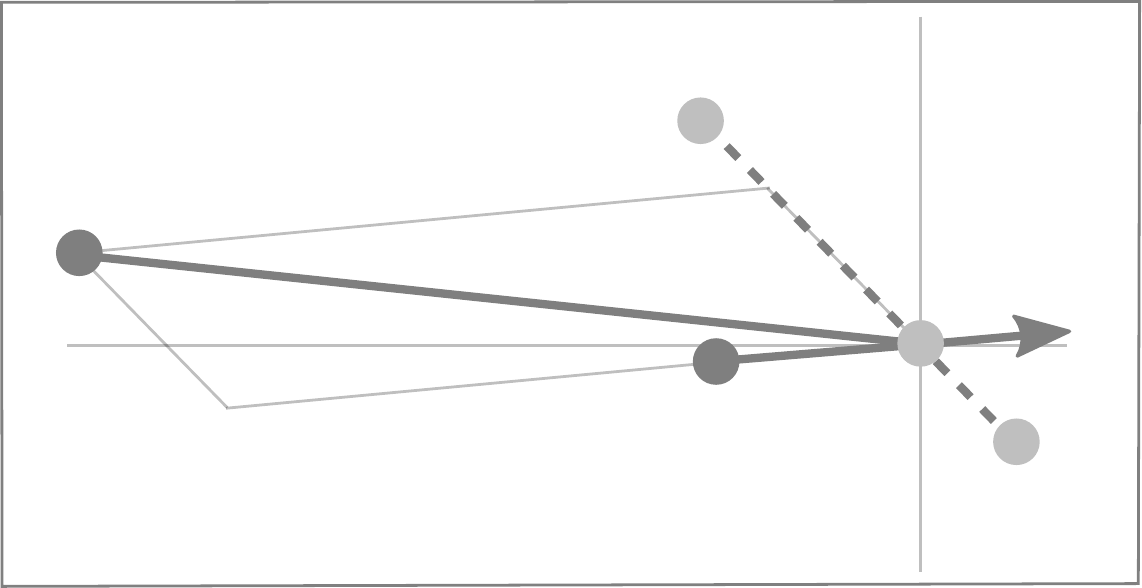
	\caption{Illustration of the geometric quantities in Algorithm \ref{alg:riemannian-ag}. \textbf{Left:} iterates and minimizer $x^*$ with $y_{k}$'s tangent space shown schematically. \textbf{Right:} the inverse exponential maps of relevant iterates in $y_{k}$'s tangent space. Note that $y_k$ is on the geodesic from $x_k$ to $v_k$ (Algorithm \ref{alg:riemannian-ag}, Line \ref{ln:y_k}); $\Exp_{y_k}^{-1}(x_{k+1})$ is in the opposite direction of $\mathrm{grad} f(y_k)$ (Algorithm \ref{alg:riemannian-ag}, Line \ref{ln:x_k+1}); also note how $\Exp_{y_k}^{-1}(v_{k+1})$ is constructed (Algorithm \ref{alg:riemannian-ag}, Line \ref{ln:v_k+1}).}
\end{figure}

Our proposed optimization procedure is shown in Algorithm \ref{alg:riemannian-ag}. We assume the algorithm is granted access to oracles that can efficiently compute the exponential map and its inverse, as well as the Riemannian gradient of function $f$. In comparison with Nesterov's accelerated gradient method in vector space \citep[p.76]{nesterov2004introductory}, we note two important differences: first, instead of linearly combining vectors, the update for iterates is computed via exponential maps; second, we introduce a paired sequence of parameters $\{(\gamma_k, \overline{\gamma}_k)\}_{k=0}^{T-1}$, for reasons that will become clear when we analyze the convergence of the algorithm. 

Algorithm \ref{alg:riemannian-ag} provides a general scheme for Nesterov-style algorithms on Riemannian manifolds, leaving the choice of many parameters to users' preference. To further simplify the parameter choice as well as the analysis, we note that the following specific choice of parameters
\[ \gamma_0\equiv\gamma = \frac{\sqrt{\beta^2+4(1+\beta)\mu h}-\beta}{\sqrt{\beta^2+4(1+\beta)\mu h}+\beta}\cdot \mu, \qquad h_k\equiv h, \forall k\ge 0, \qquad \beta_k\equiv \beta > 0, \forall k\ge 0, \]
which leads to Algorithm \ref{alg:constant-step}, a constant step instantiation of the general scheme. We leave the proof of this claim as a lemma in the Appendix.

\begin{algorithm}[hbtp]
	\caption{Constant Step Riemannian-Nesterov($x_0, h, \beta$)}  \label{alg:constant-step}
	\SetAlgoLined
	\SetKwInput{KwData}{Parameters}
	\KwData{initial point $x_0\in\mathcal{X}$, step size $h\le\frac{1}{L}$, shrinkage parameter $\beta > 0$}
	initialize $v_0 = x_0$\\
	set $\alpha = \frac{\sqrt{\beta^2+4(1+\beta)\mu h}-\beta}{2}$,~ $\gamma = \frac{\sqrt{\beta^2+4(1+\beta)\mu h}-\beta}{\sqrt{\beta^2+4(1+\beta)\mu h}+\beta}\cdot \mu$,~ $\overline{\gamma} = (1+\beta)\gamma$\\
	\For{$k=0,1,\dots,T-1$}{
		Choose $y_k = \Exp_{x_k}\left(\frac{\alpha\gamma}{\gamma+\alpha\mu}\Exp_{x_k}^{-1}(v_k)\right)$\\
		Set $x_{k+1} = \Exp_{y_k}\left(-h\nabla f(y_k)\right)$ \\ 
		Set $v_{k+1} = \Exp_{y_k}\left(\frac{(1-\alpha)\gamma}{\overline{\gamma}} \Exp_{y_k}^{-1}(v_k) - \frac{~\alpha~}{~\overline{\gamma}~} \nabla f(y_k)\right)$
	}
	{\bf Output:} $x_T$
\end{algorithm}

We move forward to analyzing the convergence properties of these two algorithms in the following two sections. In Section \ref{sec:general-analysis}, we first provide a novel generalization of Nesterov's estimate sequence to Riemannian manifolds, then show that if a specific tangent space distance comparison inequality (\ref{eq:base-change-assumption}) always holds, then Algorithm \ref{alg:riemannian-ag} converges similarly as its vector space counterpart. In Section \ref{sec:constant-step-analysis}, we establish sufficient conditions for this tangent space distance comparison inequality to hold, specifically for Algorithm \ref{alg:constant-step}, and show that under these conditions Algorithm \ref{alg:constant-step} converges in $O\left(\sqrt{\frac{L}{\mu}}\log(1/\epsilon)\right)$ iterations, a faster rate than the $O\left(\frac{L}{\mu}\log(1/\epsilon)\right)$ complexity of Riemannian gradient descent.

\section{Analysis of a new estimate sequence} \label{sec:general-analysis}
First introduced in \citep{nesterov1983method}, estimate sequences are central tools in establishing the acceleration of Nesterov's method. We first note a weaker notion of estimate sequences for functions whose domain is not necessarily a vector space.
\begin{definition} \label{def:weak-estimate-sequence}
	A pair of sequences $\{\Phi_{k}(x):\mathcal{X}\to\mathbb{R}\}_{k=0}^{\infty}$ and $\{\lambda_k\}_{k=0}^{\infty}$ is called a (weak) estimate sequence of a function $f(x):\mathcal{X}\to\mathbb{R}$, if $\lambda_k\to 0$ and for all $k\ge 0$ we have:
	\begin{equation} \label{eq:weak-estimate-sequence-definition}
	\Phi_k(x^*) \le (1-\lambda_k)f(x^*) + \lambda_k\Phi_0(x^*).
	\end{equation}
\end{definition}
This definition relaxes the original definition proposed by \citet[def. 2.2.1]{nesterov2004introductory}, in that the latter requires $\Phi_k(x) \le (1-\lambda_k)f(x) + \lambda_k\Phi_0(x)$ to hold for all $x\in\mathcal{X}$, whereas our definition only assumes it holds at the minimizer $x^*$. We note that similar observations have been made, e.g., in \citep{carmon2017convex}. This relaxation is essential for sparing us from fiddling with the global geometry of Riemannian manifolds.
%

However, there is one major obstacle in the analysis -- Nesterov's construction of quadratic function sequence critically relies on the linear metric and does not generalize to nonlinear space. An example is given in Figure \ref{fig:change-base}, where we illustrate the distortion of distance (hence quadratic functions) in tangent spaces. The key novelty in our construction is inequality (\ref{eq:phi-less-overline-phi}) which allows a broader family of estimate sequences, as well as inequality (\ref{eq:base-change-assumption}) which handles nonlinear metric distortion and fulfills inequality (\ref{eq:phi-less-overline-phi}). Before delving into the analysis of our specific construction, we recall how to construct estimate sequences and note their use in the following two lemmas.

\begin{lemma} \label{thm:estimate-sequence-construction}
	Let us assume that:
	\begin{enumerate}
          \setlength{\itemsep}{1pt}
		\item $f$ is geodesically $L$-smooth and $\mu$-strongly geodesically convex on domain $\mathcal{X}$.
		\item $\Phi_0(x)$ is an arbitrary function on $\mathcal{X}$.
		\item $\{y_k\}_{k=0}^{\infty}$ is an arbitrary sequence in $\mathcal{X}$.
		\item $\{\alpha_k\}_{k=0}^{\infty}$: $\alpha_k\in(0,1)$,  $\sum_{k=0}^{\infty} \alpha_k = \infty$. \label{eq:alpha-k-not-summable}
		\item $\lambda_0 = 1$.
	\end{enumerate}
	Then the pair of sequences $\{\Phi_k(x)\}_{k=0}^{\infty}$, $\{\lambda_k\}_{k=0}^{\infty}$ which satisfy the following recursive rules:
	\begin{align}
	\lambda_{k+1} = &~ (1-\alpha_k)\lambda_k, \\
	\overline{\Phi}_{k+1}(x) = &~ (1-\alpha_k)\Phi_k(x) + \alpha_k \left[f(y_k) + \langle \nabla f(y_k), \Exp_{y_k}^{-1}(x)\rangle + \frac{\mu}{2}\|\Exp_{y_k}^{-1}(x)\|^2\right], \label{eq:phi-recursion}\\
	\Phi_{k+1}(x^*) \le &~ \overline{\Phi}_{k+1}(x^*), \label{eq:phi-less-overline-phi}
	\end{align}
	is a (weak) estimate sequence.
\end{lemma}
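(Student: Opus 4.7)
The plan is to prove the two requirements of Definition \ref{def:weak-estimate-sequence} separately: first that $\lambda_k \to 0$, which is a purely sequence-theoretic statement, and second that the bound $\Phi_k(x^*) \le (1-\lambda_k)f(x^*) + \lambda_k\Phi_0(x^*)$ holds, which I will establish by induction on $k$. The whole point of working with the weak definition is that the inductive step will only invoke $\mu$-strong g-convexity at the single point $x^*$, so I never need to compare $\Phi_{k}$ with $f$ at arbitrary points of $\mathcal{X}$, and hence never have to confront the distortion of quadratic functions across different tangent spaces.

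For the first claim, unrolling the recursion gives $\lambda_k = \prod_{i=0}^{k-1}(1-\alpha_i)$. Since $\alpha_i \in (0,1)$, I can use the standard bound $1 - \alpha_i \le \exp(-\alpha_i)$ to get $\lambda_k \le \exp\bigl(-\sum_{i=0}^{k-1}\alpha_i\bigr)$, which tends to $0$ by the divergence assumption $\sum_{k=0}^{\infty}\alpha_k = \infty$.

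For the inductive step, the base case $k=0$ reduces to $\Phi_0(x^*) \le \Phi_0(x^*)$ since $\lambda_0 = 1$. Assuming the bound for $k$, I chain (\ref{eq:phi-less-overline-phi}) and (\ref{eq:phi-recursion}) to write
\begin{equation*}
\Phi_{k+1}(x^*) \le \overline{\Phi}_{k+1}(x^*) = (1-\alpha_k)\Phi_k(x^*) + \alpha_k\left[f(y_k) + \langle \nabla f(y_k), \Exp_{y_k}^{-1}(x^*)\rangle + \tfrac{\mu}{2}\|\Exp_{y_k}^{-1}(x^*)\|^2\right].
\end{equation*}
The crucial observation is that, by $\mu$-strong g-convexity of $f$ applied at the pair $(y_k, x^*)$, the bracketed expression is a lower bound for $f(x^*)$, hence itself at most $f(x^*)$. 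Substituting this and using the induction hypothesis on $\Phi_k(x^*)$ yields
\begin{equation*}
\Phi_{k+1}(x^*) \le (1-\alpha_k)\bigl[(1-\lambda_k)f(x^*) + \lambda_k\Phi_0(x^*)\bigr] + \alpha_k f(x^*) = \bigl(1 - (1-\alpha_k)\lambda_k\bigr)f(x^*) + (1-\alpha_k)\lambda_k\Phi_0(x^*),
\end{equation*}
which is exactly $(1-\lambda_{k+1})f(x^*) + \lambda_{k+1}\Phi_0(x^*)$ by the recursion defining $\lambda_{k+1}$, completing the induction.

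I do not expect a serious obstacle in this lemma: it is essentially the Euclidean estimate-sequence argument of Nesterov, with linear combinations replaced by geodesic quantities inside the bracket, but evaluated only at $x^*$. All the genuinely Riemannian difficulties (metric distortion, the bi-Lipschitz behaviour of $\Exp$, the tangent-space distance comparison inequality) are deferred to the later steps where one must actually verify (\ref{eq:phi-less-overline-phi}) for the specific construction used by Algorithm \ref{alg:riemannian-ag}; here (\ref{eq:phi-less-overline-phi}) is assumed as a hypothesis, which is precisely what makes this lemma clean.
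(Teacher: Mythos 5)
Your proof is correct and follows essentially the same route as the paper's: induction on the inequality at $x^*$, using strong geodesic convexity at the pair $(y_k,x^*)$ to bound the bracketed term by $f(x^*)$, then chaining with (\ref{eq:phi-less-overline-phi}) and the recursion for $\lambda_k$. The only cosmetic difference is that you spell out the $\lambda_k\to 0$ argument via $1-\alpha_i\le\exp(-\alpha_i)$, which the paper leaves implicit.
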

	The proof is similar to~\citep[Lemma 2.2.2]{nesterov2004introductory} which we include in Appendix \ref{prf:estimate-sequence-construction}.

\begin{lemma} \label{thm:estimate-sequence-implication}
	If for a (weak) estimate sequence $\{\Phi_{k}(x):\mathcal{X}\to\mathbb{R}\}_{k=0}^{\infty}$ and $\{\lambda_k\}_{k=0}^{\infty}$ we can find a sequence of iterates $\{x_k\}$, such that
	\[ f(x_k) \le \Phi_k^* \equiv \min_{x\in\mathcal{X}}\Phi_k(x), \]
	then $f(x_k)-f(x^*) \le \lambda_k(\Phi_0(x^*)-f(x^*)) \to 0$.
\end{lemma}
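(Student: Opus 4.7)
The plan is to chain together three inequalities that are essentially given for free by the hypotheses. First, I would apply the assumption on the iterate sequence to write
\[ f(x_k) \;\le\; \Phi_k^* \;=\; \min_{x\in\mathcal{X}}\Phi_k(x). \]
Next, since $x^*\in\mathcal{X}$, the minimum is bounded above by the value at $x^*$, giving $\Phi_k^* \le \Phi_k(x^*)$. Finally, I would invoke the defining property of a weak estimate sequence, namely inequality (\ref{eq:weak-estimate-sequence-definition}), to bound $\Phi_k(x^*) \le (1-\lambda_k)f(x^*) + \lambda_k\Phi_0(x^*)$.

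Concatenating these three steps yields
\[ f(x_k) \;\le\; (1-\lambda_k)f(x^*) + \lambda_k\Phi_0(x^*), \]
and subtracting $f(x^*)$ from both sides gives exactly
\[ f(x_k) - f(x^*) \;\le\; \lambda_k\bigl(\Phi_0(x^*) - f(x^*)\bigr). \]
Convergence of the right-hand side to zero then follows immediately from the fact that $\lambda_k \to 0$, which is built into Definition \ref{def:weak-estimate-sequence}.

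There is essentially no obstacle here; the lemma is a one-line consequence of unfolding the definitions, and the whole point of the weak estimate sequence relaxation (requiring the bound only at $x^*$ instead of pointwise on $\mathcal{X}$) is precisely that this trivial chain of inequalities still goes through. The real work will happen elsewhere, in Lemma \ref{thm:estimate-sequence-construction} and the Riemannian construction that follows, which must actually produce iterates $\{x_k\}$ satisfying the hypothesis $f(x_k)\le\Phi_k^*$ in the nonlinear setting; here we only need to record the payoff once such a construction is available.
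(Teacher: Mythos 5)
Your proof is correct and follows exactly the same chain of inequalities as the paper's: $f(x_k)\le\Phi_k^*\le\Phi_k(x^*)\le(1-\lambda_k)f(x^*)+\lambda_k\Phi_0(x^*)$, then subtract $f(x^*)$ and invoke $\lambda_k\to 0$. There is no meaningful difference in approach.
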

\begin{proof} By Definition \ref{def:weak-estimate-sequence} we have
	$f(x_k)\le \Phi_k^* \le \Phi_k(x^*) \le (1-\lambda_k)f(x^*) + \lambda_k\Phi_0(x^*)$.
	Hence $f(x_k) - f(x^*) \le \lambda_k(\Phi_0(x^*) - f(x^*)) \to 0$.
\end{proof}
Lemma \ref{thm:estimate-sequence-implication} immediately suggest the use of (weak) estimate sequences in establishing the convergence and analyzing the convergence rate of certain iterative algorithms. The following lemma shows that a weak estimate sequence exists for Algorithm \ref{alg:riemannian-ag}. Later in Lemma \ref{thm:x-k-bound}, we prove that the sequence $\{x_k\}$ in Algorithm \ref{alg:riemannian-ag} satisfies the requirements in Lemma \ref{thm:estimate-sequence-implication} for our estimate sequence.

\begin{lemma} \label{thm:estimate-sequence-lemma}
	Let $\Phi_0(x) = \Phi_0^* + \frac{\gamma_0}{2}\|\Exp_{y_0}^{-1}(x)\|^2$.
	Assume for all $k\ge 0$, the sequences $\{\gamma_k\}$, $\{\overline{\gamma}_k\}$, $\{v_k\}$, $\{\Phi_k^*\}$ and $\{\alpha_k\}$ satisfy
	\begin{align}
	\overline{\gamma}_{k+1} = &~ (1-\alpha_k)\gamma_k + \alpha_k\mu, \label{eq:overline-gamma-k+1}\\
	v_{k+1} = &~ \Exp_{y_k}\left(\frac{(1-\alpha_k)\gamma_k}{\overline{\gamma}_{k+1}} \Exp_{y_k}^{-1}(v_k) - \frac{\alpha_k}{\overline{\gamma}_{k+1}} \nabla f(y_k)\right)  \\
	\nonumber \Phi_{k+1}^* = &~ \left(1 - \alpha_k\right) \Phi_k^* + \alpha_k f(y_k) - \frac{\alpha_k^2}{2\overline{\gamma}_{k+1}}\|\nabla f(y_k)\|^2 \\
	&~ + \frac{\alpha_k(1-\alpha_k)\gamma_k}{\overline{\gamma}_{k+1}}\left(\frac{\mu}{2}\|\Exp_{y_k}^{-1}(v_k)\|^2 + \langle \nabla f(y_k), \Exp_{y_k}^{-1}(v_k)\rangle \right), \label{eq:phi-k+1-star} \\
	\gamma_{k+1} \| \Exp&_{y_{k+1}}^{-1}(x^*) -\Exp_{y_{k+1}}^{-1}(v_{k+1})\|^2 \le \overline{\gamma}_{k+1}\|\Exp_{y_k}^{-1}(x^*)-\Exp_{y_k}^{-1}(v_{k+1})\|^2, \label{eq:base-change-assumption} \\
	\alpha_k\in &~ (0,1), \quad \sum_{k=0}^{\infty} \alpha_k = \infty,
	\end{align}
	then the pair of sequence $\{\Phi_k(x)\}_{k=0}^{\infty}$ and $\{\lambda_k\}_{k=0}^{\infty}$, defined by
	\begin{align}
	\Phi_{k+1}(x) = &~ \Phi_{k+1}^* + \frac{\gamma_{k+1}}{2}\|\Exp_{y_{k+1}}^{-1}(x)-\Exp_{y_{k+1}}^{-1}(v_{k+1})\|^2, \\
	\lambda_0 = 1,  \quad &~ \lambda_{k+1} = (1-\alpha_k)\lambda_k.
	\end{align}
	is a (weak) estimate sequence.
\end{lemma}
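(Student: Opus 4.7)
The plan is to slot $\{\Phi_k,\lambda_k\}$ into the recursive construction of Lemma~\ref{thm:estimate-sequence-construction}. Hypotheses~1--5 of that lemma are satisfied by our standing assumptions: geodesic $L$-smoothness and $\mu$-strong convexity of $f$ from Assumption~\ref{assumption:3}, the arbitrary choices of $\Phi_0$ and $\{y_k\}$, the summability of $\{\alpha_k\}$, and $\lambda_0=1$. The only nontrivial obligation is therefore to verify the inequality $\Phi_{k+1}(x^*)\le\overline{\Phi}_{k+1}(x^*)$ of~(\ref{eq:phi-less-overline-phi}), where $\overline{\Phi}_{k+1}$ is determined by the recursion~(\ref{eq:phi-recursion}). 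I would do this by induction on $k$; the base case $k=0$ is immediate since $\Phi_0$ is given in the stated quadratic form.

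For the inductive step, I would substitute the inductive form $\Phi_k(x) = \Phi_k^* + \tfrac{\gamma_k}{2}\|\Exp_{y_k}^{-1}(x) - \Exp_{y_k}^{-1}(v_k)\|^2$ into~(\ref{eq:phi-recursion}) and expand in the tangent variable $u := \Exp_{y_k}^{-1}(x)$. Since both the inductive quadratic and the strong-convexity lower bound at $y_k$ live in the same tangent space $T_{y_k}\mathcal{M}$, the resulting $\overline{\Phi}_{k+1}$ is a genuine Euclidean quadratic in $u$ with Hessian coefficient $\overline{\gamma}_{k+1} = (1-\alpha_k)\gamma_k+\alpha_k\mu$ as in~(\ref{eq:overline-gamma-k+1}). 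Minimizing over $u$ is routine; I expect it to yield the unique minimizer $u^* = \tfrac{1}{\overline{\gamma}_{k+1}}\bigl((1-\alpha_k)\gamma_k \Exp_{y_k}^{-1}(v_k) - \alpha_k \nabla f(y_k)\bigr) = \Exp_{y_k}^{-1}(v_{k+1})$, which matches the definition of $v_{k+1}$ in the lemma, and a minimum value that collapses after standard algebra into the formula~(\ref{eq:phi-k+1-star}) for $\Phi_{k+1}^*$. Completing the square then gives the canonical representation $\overline{\Phi}_{k+1}(x) = \Phi_{k+1}^* + \tfrac{\overline{\gamma}_{k+1}}{2}\|\Exp_{y_k}^{-1}(x) - \Exp_{y_k}^{-1}(v_{k+1})\|^2$.

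The main obstacle, and the entire reason Riemannian geometry complicates this proof, appears when comparing this identity at $x^*$ to the stated definition of $\Phi_{k+1}(x^*)$. The former uses the squared distance $\|\Exp_{y_k}^{-1}(x^*)-\Exp_{y_k}^{-1}(v_{k+1})\|^2$ measured in $T_{y_k}\mathcal{M}$, while the latter uses $\|\Exp_{y_{k+1}}^{-1}(x^*)-\Exp_{y_{k+1}}^{-1}(v_{k+1})\|^2$ measured in $T_{y_{k+1}}\mathcal{M}$; in a linear space these two agree and one simply takes $\gamma_{k+1}=\overline{\gamma}_{k+1}$, but on a curved manifold inverse exponential maps based at distinct points distort length nontrivially. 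This distortion is precisely what~(\ref{eq:base-change-assumption}) is engineered to absorb. Applying~(\ref{eq:base-change-assumption}) at $x=x^*$ yields
\[
\Phi_{k+1}(x^*) = \Phi_{k+1}^* + \tfrac{\gamma_{k+1}}{2}\|\Exp_{y_{k+1}}^{-1}(x^*)-\Exp_{y_{k+1}}^{-1}(v_{k+1})\|^2 \;\le\; \Phi_{k+1}^* + \tfrac{\overline{\gamma}_{k+1}}{2}\|\Exp_{y_k}^{-1}(x^*)-\Exp_{y_k}^{-1}(v_{k+1})\|^2 = \overline{\Phi}_{k+1}(x^*),
\]
which closes the induction, and Lemma~\ref{thm:estimate-sequence-construction} then delivers the claim. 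The real content of the lemma is thus less in the quadratic-form algebra and more in isolating~(\ref{eq:base-change-assumption}) as the single geometric hypothesis that lets Nesterov's estimate-sequence machinery survive the transplant to a Riemannian setting; establishing~(\ref{eq:base-change-assumption}) itself, under curvature bounds, is then the business of Section~\ref{sec:constant-step-analysis}.
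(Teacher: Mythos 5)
Your proposal is correct and follows essentially the same route as the paper: the completing-the-square identity you derive in $T_{y_k}\mathcal{M}$ is precisely the paper's Lemma~\ref{thm:complete-square}, and combining it with the distortion hypothesis~(\ref{eq:base-change-assumption}) at $x^*$ before invoking Lemma~\ref{thm:estimate-sequence-construction} is exactly what the paper does. The only cosmetic difference is that you frame the argument as an induction on $k$; this is unnecessary here, since the canonical quadratic form of $\Phi_k$ in $T_{y_k}\mathcal{M}$ is supplied directly by the lemma's definitions rather than needing to be propagated from $\Phi_{k-1}$.
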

\begin{proof} Recall the definition of $\overline{\Phi}_{k+1}(x)$ in Equation (\ref{eq:phi-recursion}). We claim that if $\Phi_k(x) = \Phi_k^* + \frac{\gamma_k}{2}\|\Exp_{y_k}^{-1}(x)-\Exp_{y_k}^{-1}(v_k)\|^2$, then we have $\overline{\Phi}_{k+1}(x) \equiv \Phi_{k+1}^* + \frac{\overline{\gamma}_{k+1}}{2}\|\Exp_{y_k}^{-1}(x)-\Exp_{y_k}^{-1}(v_{k+1})\|^2$. The proof of this claim requires a simple algebraic manipulation as is noted as Lemma \ref{thm:complete-square}. Now using the assumption (\ref{eq:base-change-assumption}) we immediately get $\Phi_{k+1}(x^*)\le\overline{\Phi}_{k+1}(x^*)$. By Lemma \ref{thm:estimate-sequence-construction} the proof is complete.
\end{proof}
We verify the specific form of $\overline{\Phi}_{k+1}(x)$ in Lemma~\ref{thm:complete-square}, whose proof can be found in the Appendix \ref{prf:complete-square}.
\begin{lemma} \label{thm:complete-square}
	For all $k\ge 0$, if $\Phi_k(x) = \Phi_k^* + \frac{\gamma_k}{2}\|\Exp_{y_k}^{-1}(x)-\Exp_{y_k}^{-1}(v_k)\|^2$, then with $\overline{\Phi}_{k+1}$ defined as in (\ref{eq:phi-recursion}), $\overline{\gamma}_{k+1}$ as in (\ref{eq:overline-gamma-k+1}), $v_{k+1}$ as in Algorithm \ref{alg:riemannian-ag} and $\Phi_{k+1}^*$ as in ($\ref{eq:phi-k+1-star}$) we have $\overline{\Phi}_{k+1}(x) \equiv \Phi_{k+1}^* + \frac{\overline{\gamma}_{k+1}}{2}\|\Exp_{y_k}^{-1}(x)-\Exp_{y_k}^{-1}(v_{k+1})\|^2$.
\end{lemma}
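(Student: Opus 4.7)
\textbf{Proof plan for Lemma \ref{thm:complete-square}.}

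The key observation is that every term appearing in $\overline{\Phi}_{k+1}(x)$ lives in a single tangent space, $T_{y_k}\mathcal{M}$, which is an honest inner product space. So the Lemma reduces to the familiar ``complete the square'' identity from Nesterov's estimate-sequence argument, with no Riemannian subtleties invoked. I would introduce the shorthand $u = \Exp_{y_k}^{-1}(x)$, $w = \Exp_{y_k}^{-1}(v_k)$, $g = \nabla f(y_k)$, and $z = \Exp_{y_k}^{-1}(v_{k+1}) = \frac{(1-\alpha_k)\gamma_k}{\overline{\gamma}_{k+1}} w - \frac{\alpha_k}{\overline{\gamma}_{k+1}} g$, where the last identity is exactly the definition of $v_{k+1}$ from Algorithm \ref{alg:riemannian-ag}.

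Next, I would expand the two sides in this notation and match coefficients. Substituting the assumed form of $\Phi_k$ into the recursion (\ref{eq:phi-recursion}), the left side becomes
\[
\overline{\Phi}_{k+1}(x) = (1-\alpha_k)\Phi_k^* + \tfrac{(1-\alpha_k)\gamma_k}{2}\|u-w\|^2 + \alpha_k f(y_k) + \alpha_k\langle g, u\rangle + \tfrac{\alpha_k\mu}{2}\|u\|^2,
\]
while the target right side $\Phi_{k+1}^* + \tfrac{\overline{\gamma}_{k+1}}{2}\|u-z\|^2$ expands as a quadratic in $u$ with coefficients determined by $z$ and $\overline{\gamma}_{k+1}$. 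The quadratic-in-$u$ term matches because $(1-\alpha_k)\gamma_k + \alpha_k\mu = \overline{\gamma}_{k+1}$ by (\ref{eq:overline-gamma-k+1}). The linear-in-$u$ term on the left is $\langle u,\, \alpha_k g - (1-\alpha_k)\gamma_k w\rangle$, which matches the linear-in-$u$ term $-\overline{\gamma}_{k+1}\langle u, z\rangle$ on the right precisely by the definition of $z$.

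The only non-trivial bookkeeping is in the $u$-independent constant term. Matching constants forces
\[
\Phi_{k+1}^* = (1-\alpha_k)\Phi_k^* + \alpha_k f(y_k) + \tfrac{(1-\alpha_k)\gamma_k}{2}\|w\|^2 - \tfrac{\overline{\gamma}_{k+1}}{2}\|z\|^2.
\]
I would then expand $\tfrac{\overline{\gamma}_{k+1}}{2}\|z\|^2 = \tfrac{1}{2\overline{\gamma}_{k+1}}\|(1-\alpha_k)\gamma_k w - \alpha_k g\|^2$, collect the $\|g\|^2$, $\langle w,g\rangle$, and $\|w\|^2$ terms, and simplify the $\|w\|^2$ coefficient using
\[
\tfrac{(1-\alpha_k)\gamma_k}{2} - \tfrac{(1-\alpha_k)^2\gamma_k^2}{2\overline{\gamma}_{k+1}} = \tfrac{(1-\alpha_k)\gamma_k}{2\overline{\gamma}_{k+1}}\bigl(\overline{\gamma}_{k+1} - (1-\alpha_k)\gamma_k\bigr) = \tfrac{\alpha_k(1-\alpha_k)\gamma_k\mu}{2\overline{\gamma}_{k+1}},
\]
again using (\ref{eq:overline-gamma-k+1}). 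The resulting expression is exactly (\ref{eq:phi-k+1-star}), completing the identity.

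I do not expect any real obstacle: this is pure linear algebra in one tangent space, and the definitions of $v_{k+1}$, $\overline{\gamma}_{k+1}$, and $\Phi_{k+1}^*$ were precisely engineered so that the completion of the square goes through. The only thing to be careful about is that the distance squared on the right-hand side is measured as $\|\Exp_{y_k}^{-1}(x) - \Exp_{y_k}^{-1}(v_{k+1})\|^2$, i.e., in $T_{y_k}\mathcal{M}$ and \emph{not} in $T_{y_{k+1}}\mathcal{M}$; the base-change to $T_{y_{k+1}}\mathcal{M}$ is exactly what the nonlinear-metric-distortion inequality (\ref{eq:base-change-assumption}) is later used to control, and is not part of this lemma.
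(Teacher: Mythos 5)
Your proposal is correct and is essentially the same argument the paper uses: both reduce to completing the square in the single inner-product space $T_{y_k}\mathcal{M}$, using $\overline{\gamma}_{k+1}=(1-\alpha_k)\gamma_k+\alpha_k\mu$ to match the quadratic term, the definition of $v_{k+1}$ to match the linear term, and a short algebraic simplification to recover the formula for $\Phi_{k+1}^*$ in (\ref{eq:phi-k+1-star}). Your closing remark, that the right-hand side is deliberately measured in $T_{y_k}\mathcal{M}$ rather than $T_{y_{k+1}}\mathcal{M}$ and that the base change is deferred to (\ref{eq:base-change-assumption}), is exactly the point the paper makes immediately after the lemma.
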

The next lemma asserts that the iterates $\{x_k\}$ of Algorithm \ref{alg:riemannian-ag} satisfy the requirement that the function values $f(x_k)$ are upper bounded by $\Phi_k^*$ defined in our estimate sequence.
\begin{lemma} \label{thm:x-k-bound}
	Assume $\Phi_0^*=f(x_0)$, and $\{\Phi_k^*\}$ be defined as in (\ref{eq:phi-k+1-star}) with $\{x_k\}$ and other terms defined as in Algorithm \ref{alg:riemannian-ag}. Then we have $\Phi_{k}^*\ge f(x_k)$ for all $k\ge 0$.
\end{lemma}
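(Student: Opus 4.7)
The plan is to mirror the classical Nesterov argument and proceed by induction on $k$. The base case $k=0$ is immediate from $\Phi_0^* = f(x_0)$. For the inductive step, assume $\Phi_k^* \ge f(x_k)$; I want to derive $\Phi_{k+1}^* \ge f(x_{k+1})$ by chaining a lower bound on the former with an upper bound on the latter.

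For the lower bound, I would substitute the inductive hypothesis into the defining recursion (\ref{eq:phi-k+1-star}), then invoke g-convexity of $f$ at $y_k$ in the form $f(x_k) \ge f(y_k) + \langle \nabla f(y_k), \Exp_{y_k}^{-1}(x_k)\rangle$, and finally discard the non-negative term $\frac{\alpha_k(1-\alpha_k)\gamma_k\mu}{2\overline{\gamma}_{k+1}}\|\Exp_{y_k}^{-1}(v_k)\|^2$. After collecting terms this yields
\[
\Phi_{k+1}^* \;\ge\; f(y_k) \;-\; \frac{\alpha_k^2}{2\overline{\gamma}_{k+1}}\|\nabla f(y_k)\|^2 \;+\; (1-\alpha_k)\left\langle \nabla f(y_k),\; \Exp_{y_k}^{-1}(x_k) + \tfrac{\alpha_k\gamma_k}{\overline{\gamma}_{k+1}}\Exp_{y_k}^{-1}(v_k)\right\rangle.
\]
For the upper bound, I would apply $L$-g-smoothness to the gradient step $x_{k+1} = \Exp_{y_k}(-h_k \nabla f(y_k))$, using $\Exp_{y_k}^{-1}(x_{k+1}) = -h_k\nabla f(y_k)$ and the step-size condition $h_k \le 1/L$, to get the standard descent estimate $f(x_{k+1}) \le f(y_k) - \tfrac{h_k}{2}\|\nabla f(y_k)\|^2$.

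Two simplifications then finish the proof. First, the defining equation $\alpha_k^2 = h_k \overline{\gamma}_{k+1}$ in Algorithm \ref{alg:riemannian-ag} makes $\alpha_k^2/\overline{\gamma}_{k+1}$ equal to $h_k$, so the two $\|\nabla f(y_k)\|^2$ terms cancel exactly when we form $\Phi_{k+1}^* - f(x_{k+1})$. Second, I need the leftover inner product to vanish, i.e.
\[
\Exp_{y_k}^{-1}(x_k) + \tfrac{\alpha_k\gamma_k}{\overline{\gamma}_{k+1}}\Exp_{y_k}^{-1}(v_k) = 0.
\]
This is the step I expect to be the only real subtlety: in a vector space it follows immediately from the definition of $y_k$ by rearranging a convex combination, but on a manifold it requires a genuinely Riemannian observation. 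Line \ref{ln:y_k} places $y_k$ on the unique geodesic joining $x_k$ to $v_k$ at parameter $t = \alpha_k\gamma_k/(\gamma_k + \alpha_k\mu)$, so by reversibility and uniqueness of this geodesic the two inverse-exponentials $\Exp_{y_k}^{-1}(x_k)$ and $\Exp_{y_k}^{-1}(v_k)$ are antiparallel with norm ratio $t:(1-t)$. A short computation using $\overline{\gamma}_{k+1} = (1-\alpha_k)\gamma_k + \alpha_k\mu$ gives $t/(1-t) = \alpha_k\gamma_k/\overline{\gamma}_{k+1}$, which is precisely the cancellation needed. Together these two simplifications give $\Phi_{k+1}^* - f(x_{k+1}) \ge 0$ and close the induction. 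Notably, this proof does not invoke the tangent-space distance comparison inequality (\ref{eq:base-change-assumption}), which is used only in Lemma \ref{thm:estimate-sequence-lemma}; here it is the nonlinear geodesic geometry at $y_k$ that does the work.
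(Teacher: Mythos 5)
Your proof is correct and follows essentially the same inductive argument as the paper: substitute the inductive hypothesis into (\ref{eq:phi-k+1-star}), apply g-convexity at $y_k$, drop the nonnegative $\mu$-term, cancel the inner product using the definition of $y_k$, and compare against the $L$-g-smoothness descent estimate for the gradient step, using $\alpha_k^2 = h_k\overline{\gamma}_{k+1}$ and $h_k\le 1/L$. The only difference is presentational: you spell out why $\Exp_{y_k}^{-1}(x_k) + \tfrac{\alpha_k\gamma_k}{\overline{\gamma}_{k+1}}\Exp_{y_k}^{-1}(v_k)=0$ (antiparallel tangent vectors along the unique geodesic with the correct length ratio), whereas the paper simply attributes this equality to Line \ref{ln:y_k} of Algorithm \ref{alg:riemannian-ag}.
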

The proof is standard. We include it in Appendix \ref{prf:x-k-bound} for completeness.
Finally, we are ready to state the following theorem on the convergence rate of Algorithm \ref{alg:riemannian-ag}.

\begin{theorem}[Convergence of Algorithm \ref{alg:riemannian-ag}] \label{thm:main-theorem-general-scheme}
	For any given $T\ge 0$, assume (\ref{eq:base-change-assumption}) is satisfied for all $0\le k\le T$, then Algorithm \ref{alg:riemannian-ag} generates a sequence $\{x_k\}_{k=0}^{\infty}$ such that
	\begin{equation} \label{eq:convergence-algorithm-1}
	 f(x_T) - f(x^*) \le \lambda_T\left(f(x_0) - f(x^*) + \frac{\gamma_0}{2}\|\Exp_{x_0}^{-1}(x^*)\|^2 \right)
	 \end{equation}
	where $\lambda_0 = 1$ and $\lambda_k = \prod_{i=0}^{k-1}(1-\alpha_i)$.
\end{theorem}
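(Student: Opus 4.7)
The plan is to assemble the three preceding lemmas into a single bound at iteration $T$. First, I would pin down the base case by setting $\Phi_0(x) = f(x_0) + \frac{\gamma_0}{2}\|\Exp_{x_0}^{-1}(x)\|^2$, so that $\Phi_0^* = f(x_0)$ attained at $x_0$. Because Algorithm \ref{alg:riemannian-ag} initializes $v_0 = x_0$, the update in Line \ref{ln:y_k} collapses at $k=0$ to $y_0 = \Exp_{x_0}(0) = x_0$, so $\Phi_0$ already has the canonical form $\Phi_0^* + \frac{\gamma_0}{2}\|\Exp_{y_0}^{-1}(x)-\Exp_{y_0}^{-1}(v_0)\|^2$ required by Lemma \ref{thm:estimate-sequence-lemma}. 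In particular $\Phi_0(x^*) - f(x^*) = f(x_0) - f(x^*) + \frac{\gamma_0}{2}\|\Exp_{x_0}^{-1}(x^*)\|^2$, which is exactly the quantity appearing on the right of (\ref{eq:convergence-algorithm-1}).

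Next, I would invoke Lemma \ref{thm:estimate-sequence-lemma}. The recursions for $\overline{\gamma}_{k+1}$, $v_{k+1}$, and $\Phi_{k+1}^*$ are all built into Algorithm \ref{alg:riemannian-ag}, and the standing hypothesis of the theorem is precisely that the tangent-space distance comparison (\ref{eq:base-change-assumption}) holds for every $0\le k\le T$. This is exactly what Lemma \ref{thm:estimate-sequence-lemma} needs to propagate $\Phi_{k+1}(x^*)\le\overline{\Phi}_{k+1}(x^*)$ from one step to the next; hence $\{\Phi_k, \lambda_k\}_{k=0}^{T}$ satisfies the weak-estimate inequality $\Phi_k(x^*)\le(1-\lambda_k)f(x^*)+\lambda_k\Phi_0(x^*)$ for all $k\le T$. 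For this finite-horizon statement the asymptotic side condition $\sum \alpha_k = \infty$ (which guarantees $\lambda_k\to 0$ in Definition \ref{def:weak-estimate-sequence}) is not used; only the one-step inequality is invoked $T$ times.

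Then I would apply Lemma \ref{thm:x-k-bound} to get $f(x_T)\le \Phi_T^*$, and observe that by the very form of $\Phi_T$ in Lemma \ref{thm:estimate-sequence-lemma} we have $\Phi_T(x^*) = \Phi_T^* + \frac{\gamma_T}{2}\|\Exp_{y_T}^{-1}(x^*)-\Exp_{y_T}^{-1}(v_T)\|^2 \ge \Phi_T^*$. Chaining these inequalities with the weak-estimate bound gives
\begin{equation*}
f(x_T) \le \Phi_T^* \le \Phi_T(x^*) \le (1-\lambda_T)f(x^*) + \lambda_T\Phi_0(x^*),
\end{equation*}
and subtracting $f(x^*)$ and substituting the expression for $\Phi_0(x^*)$ computed in the base case yields (\ref{eq:convergence-algorithm-1}). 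The identity $\lambda_T = \prod_{i=0}^{T-1}(1-\alpha_i)$ is immediate from the recursion $\lambda_{k+1} = (1-\alpha_k)\lambda_k$ with $\lambda_0 = 1$.

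Since every ingredient has already been isolated, this proof is essentially an orchestration step and presents no real obstacle by itself. The substantive work was done upstream, in constructing the weak estimate sequence of Lemma \ref{thm:estimate-sequence-lemma} and in the algebraic recentering carried out by Lemma \ref{thm:complete-square}. The genuinely hard part, namely showing that the tangent-space distance comparison (\ref{eq:base-change-assumption}) actually holds under concrete parameter choices, is the one deferred to Section \ref{sec:constant-step-analysis}; here I simply absorb it as a hypothesis.
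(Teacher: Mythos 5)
Your proof is correct and follows essentially the same route as the paper's: fix $\Phi_0$ with $\Phi_0^* = f(x_0)$, invoke Lemma~\ref{thm:estimate-sequence-lemma} (using the hypothesis that (\ref{eq:base-change-assumption}) holds up to step $T$) to get the weak estimate sequence, invoke Lemma~\ref{thm:x-k-bound} to get $f(x_T)\le\Phi_T^*$, and conclude via the chain in Lemma~\ref{thm:estimate-sequence-implication}. The only differences are cosmetic --- you unfold Lemma~\ref{thm:estimate-sequence-implication} inline instead of citing it, and you add the helpful observations that $y_0=x_0$ (so $\Phi_0$ already has the canonical centered form) and that the asymptotic condition $\sum\alpha_k=\infty$ is unnecessary for the finite-horizon bound.
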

\begin{proof}
	The proof is similar to \citep[Theorem 2.2.1]{nesterov2004introductory}. We choose $\Phi_0(x) = f(x_0) + \frac{\gamma_0}{2}\|\Exp_{y_0}^{-1}(x)\|^2$, hence $\Phi_0^* = f(x_0)$. By Lemma \ref{thm:estimate-sequence-lemma} and Lemma \ref{thm:x-k-bound}, the assumptions in Lemma \ref{thm:estimate-sequence-implication} hold. It remains to use Lemma \ref{thm:estimate-sequence-implication}.
\end{proof}

\section{Local fast rate with a constant step scheme} \label{sec:constant-step-analysis}

By now we see that almost all the analysis of Nesterov's generalizes, except that the assumption in (\ref{eq:base-change-assumption}) is not necessarily satisfied. In vector space, the two expressions both reduce to $x^* - v_{k+1}$ and hence (\ref{eq:base-change-assumption}) trivially holds with $\gamma = \overline{\gamma}$. On Riemannian manifolds, however, due to the nonlinear Riemannian metric and the associated exponential maps,  $\| \Exp_{y_{k+1}}^{-1}(x^*) -\Exp_{y_{k+1}}^{-1}(v_{k+1})\|$ and $\|\Exp_{y_k}^{-1}(x^*)-\Exp_{y_k}^{-1}(v_{k+1})\|$ in general do not equal (illustrated in Figure \ref{fig:change-base}). Bounding the difference between these two quantities points the way forward for our analysis, which is also our main contribution in this section. We start with two lemmas comparing a geodesic triangle and the triangle formed by the preimage of its vertices in the tangent space, in two constant curvature spaces: hyperbolic space and the hypersphere.
\begin{figure}[hbt]
	\centering \hspace{30pt} \def\svgwidth{220pt}
	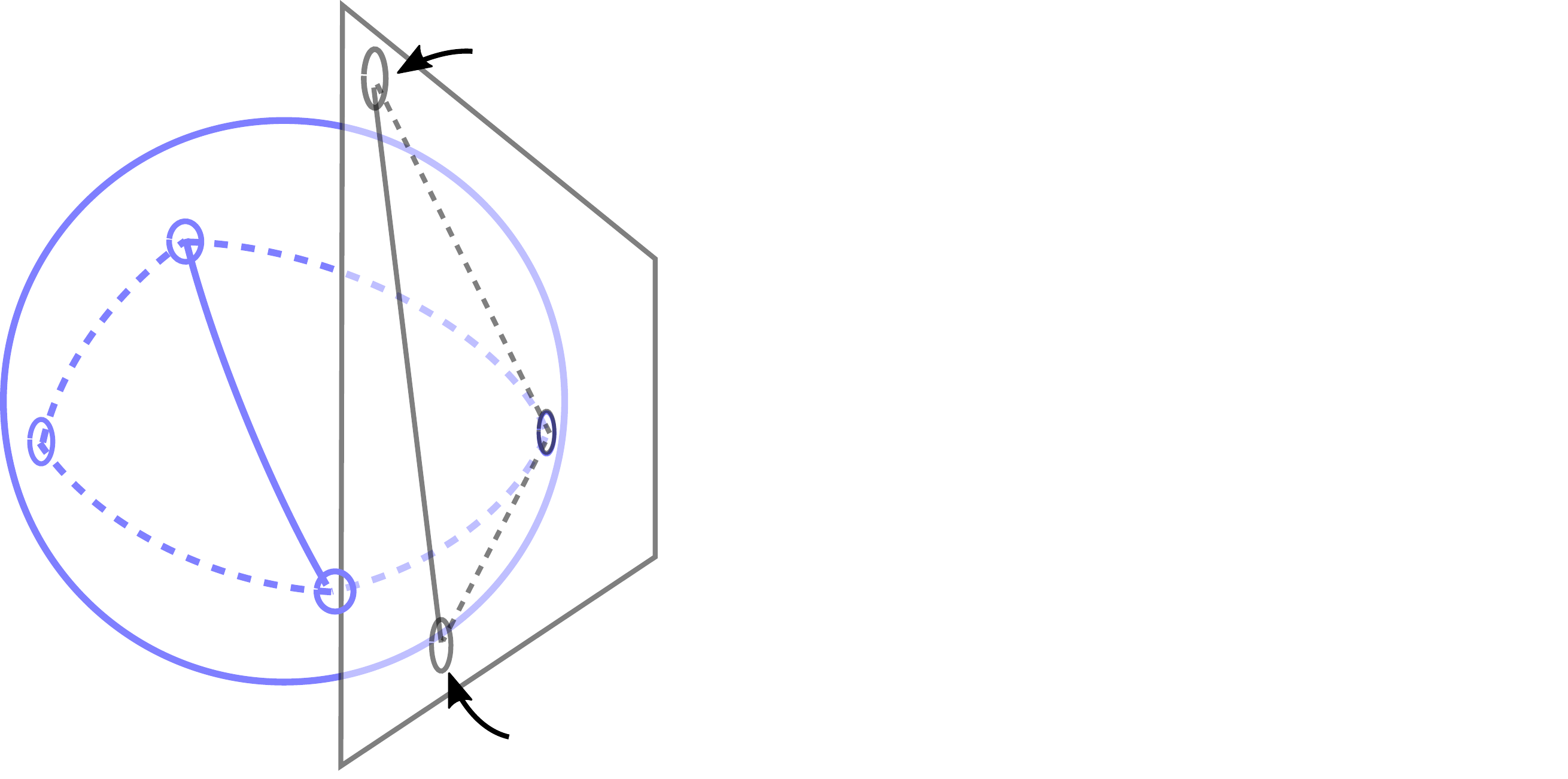 \hspace{-80pt} \def\svgwidth{190pt} 
	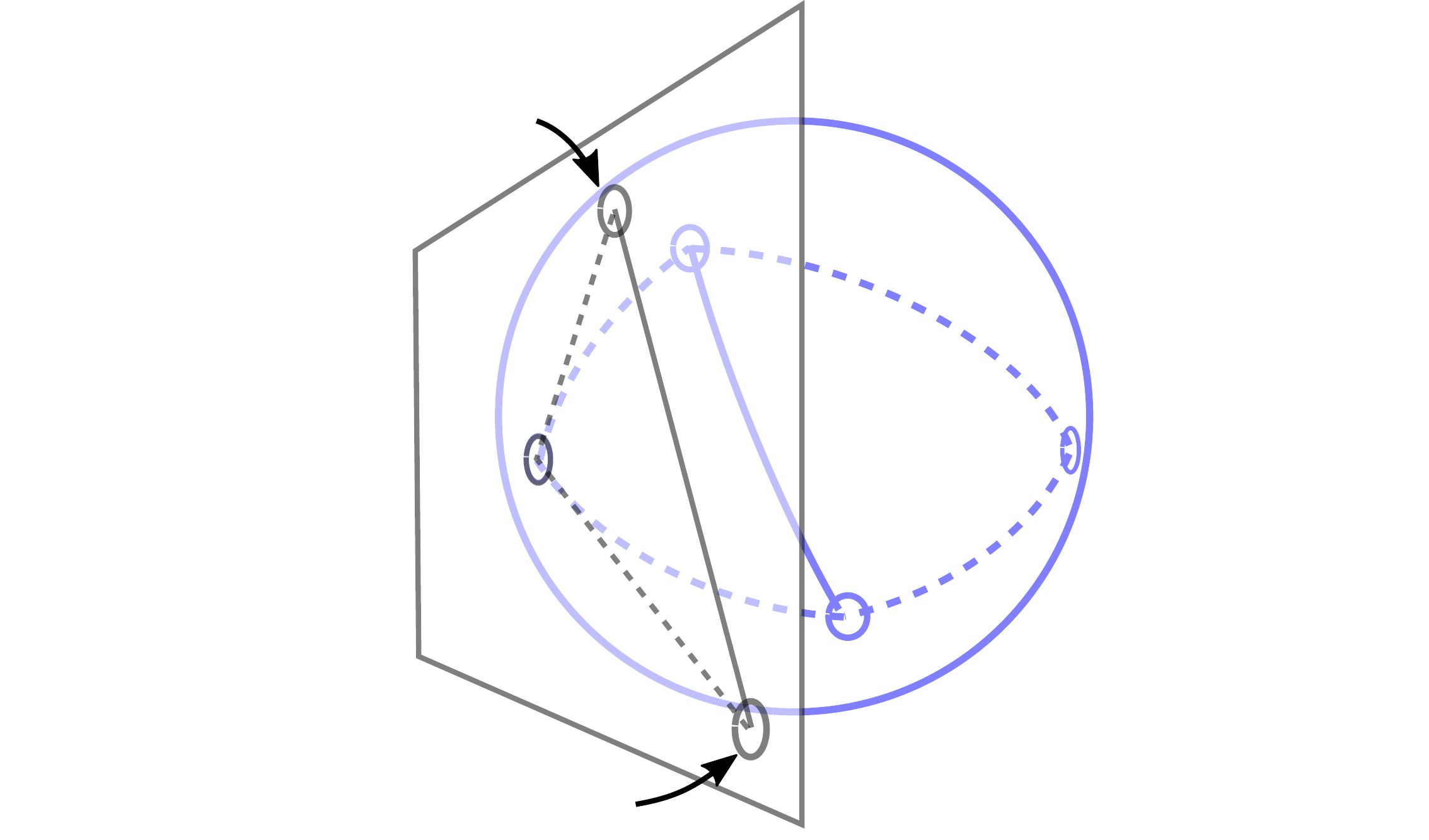 \hspace{-30pt}
	\caption{A schematic illustration of the geometric quantities in Theorem \ref{thm:squared-distance-ratio-bound}. Tangent spaces of $y_{k}$ and $y_{k+1}$ are shown in separate figures to reduce cluttering. Note that even on a sphere (which has constant positive sectional curvature), $d(x^*, v_{k+1}), \|\Exp_{y_{k}}^{-1}(x^*)-\Exp_{y_{k}}^{-1}(v_{k+1})\|$ and $ \|\Exp_{y_{k+1}}^{-1}(x^*)-\Exp_{y_{k+1}}^{-1}(v_{k+1})\|$ generally do not equal.} \label{fig:change-base}
\end{figure}

\begin{lemma}[bi-Lipschitzness of the exponential map in hyperbolic space] \label{thm:hyperbolic-squared-distance-distortion}
	Let $a,b,c$ be the side lengths of a geodesic triangle in a hyperbolic space with constant sectional curvature $-1$, and $A$ is the angle between sides $b$ and $c$. Furthermore, assume $b\le\frac{1}{4},c\ge 0$. Let $\triangle\bar{a}\bar{b}\bar{c}$ be the comparison triangle in Euclidean space, with $\bar{b}=b,\bar{c}=c,\bar{A}=A$, then
	\begin{equation}
	\bar{a}^2 \le a^2\le (1+2b^2)\bar{a}^2.
	\end{equation}
\end{lemma}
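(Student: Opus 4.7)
The plan is to start from the hyperbolic law of cosines $\cosh a = \cosh b\cosh c - \sinh b \sinh c\cos A$ and compare it with the Euclidean law of cosines $\bar a^2 = b^2 + c^2 - 2bc\cos A$. A useful rewriting of the hyperbolic identity (using $\cosh x - 1 = 2\sinh^2(x/2)$ and $1 - \cos A = 2\sin^2(A/2)$) is
\[ \sinh^2(a/2) = \sinh^2\bigl((b-c)/2\bigr) + \sinh b\,\sinh c\,\sin^2(A/2), \]
alongside its Euclidean analogue $(\bar a/2)^2 = \bigl((b-c)/2\bigr)^2 + bc\,\sin^2(A/2)$. This form separates the ``parallel'' contribution (the $(b-c)/2$ term) from the ``transverse'' one (the $\sin^2(A/2)$ term), and each hyperbolic summand exceeds its Euclidean counterpart pointwise via $\sinh x \ge x$ on $[0,\infty)$.

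The lower bound $\bar a \le a$ is just the Toponogov/Alexandrov comparison inequality for non-positively curved spaces, applied to hyperbolic space with $K = -1 \le 0$; it can also be derived directly from the displayed identity above by showing $\sinh^2(a/2) \ge \sinh^2(\bar a/2)$ via elementary convexity and monotonicity of $\sinh$.

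The upper bound $a^2 \le (1+2b^2)\bar a^2$ is the delicate part, because $c$ may be arbitrarily large while only $b$ is constrained by $b \le 1/4$. The strategy is to exploit the smallness of $b$ via the Taylor expansions $\cosh b = 1 + b^2/2 + \varepsilon_1(b)$ and $\sinh b = b + \varepsilon_2(b)$, where $|\varepsilon_1(b)| = O(b^4)$ and $|\varepsilon_2(b)| = O(b^3)$ admit explicit numerical bounds on $[0,1/4]$. To handle unbounded $c$, I would divide the hyperbolic law of cosines by $\cosh c$:
\[ \frac{\cosh a}{\cosh c} = \cosh b - (\tanh c)\,\sinh b\,\cos A, \]
so that the unbounded parameter $c$ enters only through the bounded quantity $\tanh c \in [-1,1]$. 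Writing $a = c + \eta$ and using the $\cosh$ addition formula, one solves implicitly for $\eta$ as a function of $(b, \tanh c, A)$, Taylor expands in $b$, and checks that $(c+\eta)^2 \le (1+2b^2)\bar a^2$ reduces to a one-parameter inequality in $\tanh c \in [-1,1]$. The main obstacle is precisely the unboundedness of $c$: a naive joint Taylor expansion in $(b,c)$ around the origin fails, and one must factor out the large-$c$ quantities $\cosh c$ and $\sinh c$ to isolate the bounded $\tanh c$. The condition $b \le 1/4$ keeps the remainders $\varepsilon_i(b)$ small, while the constant $2$ in $(1+2b^2)$ provides ample slack to absorb them without sharp bookkeeping (indeed the sharp leading-order constant is closer to $b^2/3$ than $2b^2$).
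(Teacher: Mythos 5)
Your lower bound argument is fine (Toponogov or direct comparison both work), and your half-angle rewriting $\sinh^2(a/2) = \sinh^2\bigl((b-c)/2\bigr) + \sinh b\,\sinh c\,\sin^2(A/2)$ is a clean observation. The difficulty is in the upper bound, and there your sketch has a real gap.

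You correctly identify the obstruction (unbounded $c$) and propose to factor out the large-$c$ behavior by dividing the hyperbolic law of cosines by $\cosh c$. Writing $a = c+\eta$ and using the addition formula this does give the clean implicit relation $\cosh\eta + (\tanh c)\sinh\eta = \cosh b - (\tanh c)\sinh b\cos A$, so $\eta$ is a function of $(b,\tanh c,A)$ alone. But the target inequality $(c+\eta)^2 \le (1+2b^2)(b^2+c^2-2bc\cos A)$ is \emph{not} a function of $(b,\tanh c,A)$: after cancelling $c^2$ it reads $2c\eta + \eta^2 + 2bc\cos A - b^2 \le 2b^2(b^2+c^2-2bc\cos A)$, and $c$ appears explicitly (linearly on the left, quadratically on the right), not merely through $\tanh c$. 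So the claimed reduction ``to a one-parameter inequality in $\tanh c\in[-1,1]$'' does not hold as stated, and the proposal stops precisely where the delicate bookkeeping begins. (The inequality \emph{is} plausibly true along these lines — the leading term $2c\eta \approx -2bc\cos A$ cancels the $+2bc\cos A$, leaving lower-order terms to be absorbed by $2b^2c^2$ — but establishing this rigorously requires controlling both $\eta$ and its error in $b$ uniformly as $c\to\infty$, which you have not done.)

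For comparison, the paper sidesteps this by splitting on $c$: for $c\ge\tfrac12$ it invokes a known cosine-law comparison (giving $a^2 \le \frac{c}{\tanh c}b^2 + c^2 - 2bc\cos A$) together with the crude Euclidean bound $\bar a \ge c-b$ to absorb the excess into $2b^2\bar a^2$; for $c\le\tfrac12$ both $b$ and $c$ are small, and it compares the Taylor series of $\cosh a$ and $\cosh\bar a$ coefficient by coefficient via two multinomial identities, showing the discrepancy is at most $\tfrac12 b^2\bar a^2$. Your one-shot approach is an appealing alternative, but to make it a proof you would need to carry out the asymptotics in $c$ explicitly rather than asserting that $c$ disappears.
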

\begin{proof}
	The proof of this lemma contains technical details that deviate from our main focus; so we defer them to the appendix. The first inequality is well known. To show the second inequality, we have Lemma \ref{thm:large-c-hyperbolic} and Lemma \ref{thm:small-c-hyperbolic} (in Appendix) which in combination complete the proof.
\end{proof}
We also state without proof that by the same techniques one can show the following result holds.
\begin{lemma}[bi-Lipschitzness of the exponential map on hypersphere] \label{thm:hypersphere-squared-distance-distortion}
	Let $a,b,c$ be the side lengths of a geodesic triangle in a hypersphere with constant sectional curvature $1$, and $A$ is the angle between sides $b$ and $c$. Furthermore, assume $b\le\frac{1}{4},c\in[0,\frac{\pi}{2}]$. Let $\triangle\bar{a}\bar{b}\bar{c}$ be the comparison triangle in Euclidean space, with $\bar{b}=b,\bar{c}=c,\bar{A}=A$, then
	\begin{equation}
	a^2\le \bar{a}^2\le (1+2b^2)a^2.
	\end{equation}
\end{lemma}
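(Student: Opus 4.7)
The plan is to follow the proof strategy of Lemma~\ref{thm:hyperbolic-squared-distance-distortion}, replacing the hyperbolic law of cosines by its spherical analogue
\[
\cos(a) = \cos(b)\cos(c) + \sin(b)\sin(c)\cos(A),
\]
and comparing against the Euclidean law $\bar{a}^2 = b^2 + c^2 - 2bc\cos(A)$. The lower bound $a^2 \le \bar{a}^2$ is the hinge version of Toponogov's comparison theorem in nonnegative curvature: fixing the two sides $b, c$ and the included angle $A$, the distance between the endpoints on the sphere does not exceed the corresponding Euclidean distance. The substantive work therefore lies in the upper bound $\bar{a}^2 \le (1 + 2 b^2)\, a^2$, and I would split it into two regimes mirroring the two auxiliary lemmas that underlie the hyperbolic version.

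In the small-$c$ regime (say $b, c \le 1/4$), every side length is at most $1/4$ and the Taylor series of $\sin$ and $\cos$ converge rapidly. Expanding the spherical law of cosines and $\cos a = 1 - a^2/2 + a^4/24 - \cdots$ to fourth order and subtracting the Euclidean identity yields, after a direct cancellation, $\bar{a}^2 - a^2 = \tfrac{1}{3}\, b^2 c^2 \sin^2 A + R$ with remainder $R$ of size $O((b^2+c^2)^3)$. The Euclidean bound $\bar{a}^2 \ge c^2 \sin^2 A$ (drop a perpendicular from the vertex opposite $\bar{a}$) together with $a^2 \ge \bar{a}^2 - \tfrac{1}{3} b^2 c^2 \sin^2 A$ then gives $\tfrac{1}{3} b^2 c^2 \sin^2 A \le b^2 a^2 / (3 - b^2)$, which for $b \le 1/4$ sits comfortably below $b^2 a^2$; the remainder $R$ is absorbed into the slack between $b^2$ and $2 b^2$.

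In the large-$c$ regime ($1/4 \le c \le \pi/2$), the opposite side $a$ is uniformly bounded below, so it suffices to prove an absolute bound $\bar{a}^2 - a^2 \le C b^2$ with $C$ small enough that dividing by $a^2 \gtrsim c^2$ returns a ratio $\le 2 b^2$. Treating $\cos a$ as a smooth function of $b$ with $c, A$ fixed, the expansion $\cos a = \cos c + b \sin c \cos A - (b^2/2)\cos c + O(b^3)$ gives $a = c - b\cos A + O(b^2)$; inserting into $a^2$ and subtracting $\bar{a}^2 = c^2 + b^2 - 2bc\cos A$ yields
\[
\bar{a}^2 - a^2 \;=\; b^2 \sin^2 A \,(1 - c \cot c) \;+\; O(b^3).
\]
The factor $(1 - c\cot c)$ lies in $[0, 1)$ on $(0, \pi/2]$, and the assumption $c \le \pi/2$ ensures $\cos c \ge 0$ so that no pathological cancellation inflates the higher-order remainder; combined with $a^2 \gtrsim c^2 \ge 1/16$, this closes the bound.

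The main obstacle, exactly as in the hyperbolic case, is the sharp constant bookkeeping at the boundary between the two regimes, where one must verify that the multiplicative constant in the upper bound does not exceed $2$. This is precisely what the generous $b \le 1/4$ assumption buys us: it renders all the higher-order remainders small compared to the leading $b^2$ contribution, so that the $\tfrac{1}{3}$ constant from the small-$c$ analysis and the $\sin^2 A(1 - c\cot c)$ factor from the large-$c$ analysis both fit inside $2$ with margin to spare.
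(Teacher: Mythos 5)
The paper does not actually supply a proof of this lemma: it is stated ``without proof,'' as following by the same techniques used for Lemma~\ref{thm:hyperbolic-squared-distance-distortion}, whose appendix proof is split between Lemma~\ref{thm:large-c-hyperbolic} (large~$c$) and Lemma~\ref{thm:small-c-hyperbolic} (small~$c$). Your plan and the leading-order computations are consistent with that blueprint: $a^2\le\bar a^2$ is the nonnegative-curvature hinge comparison; the leading term $\tfrac{1}{3}b^2c^2\sin^2 A$ in $\bar a^2-a^2$ for small $c$ is the correct exact coefficient; and $\bar a^2 - a^2 = b^2\sin^2 A\,(1-c\cot c)+O(b^3)$ is the right spherical analogue of the $\bigl(\tfrac{c}{\tanh c}-1\bigr)b^2$ bound of Lemma~\ref{thm:large-c-hyperbolic}.

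The large-$c$ closure has a genuine gap. You split at $c=1/4$ and then invoke ``$a^2\gtrsim c^2\ge 1/16$,'' but with $b\le 1/4$ and $c\ge 1/4$ the side $a$ is \emph{not} bounded below: take $b=c=1/4$ and $A\to 0$, so $a\to 0$ while $c$ stays at $1/4$. The target inequality still holds in that limit, but only because $\sin^2 A$ vanishes at the same rate as $a^2$, and your large-$c$ sketch never wires $\sin^2 A$ to a bound on $a^2$ (unlike your small-$c$ argument, where you do use $\bar a^2\ge c^2\sin^2 A$). The paper avoids this by placing the split at $c=1/2$, so that $a\ge c-b\ge 1/4$ in the large-$c$ regime; alternatively, you could keep your split but feed $\bar a^2\ge c^2\sin^2 A$ into the large-$c$ estimate so the $\sin^2 A$ factors cancel (noting $(1-c\cot c)/c^2\le 4/\pi^2$ on $(0,\pi/2]$), or, closer to the paper's approach, replace the Taylor expansion by the spherical analogue of the trigonometric distance inequality used in Lemma~\ref{thm:large-c-hyperbolic}, which sidesteps the remainder altogether. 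A secondary issue: in the small-$c$ regime you absorb the remainder $R=O((b^2+c^2)^3)$ into ``the slack between $b^2$ and $2b^2$,'' but that absorption is exactly what Lemma~\ref{thm:small-c-hyperbolic} has to prove via a full term-by-term comparison of the Taylor series --- it is not free, and leaving it as a big-$O$ claim leaves the proof incomplete.
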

Albeit very much simplified, spaces of constant curvature are important objects to study, because often their properties can be generalized to general Riemannian manifolds with bounded curvature, specifically via the use of powerful comparison theorems in metric geometry \citep{burago2001course}. In our case, we use these two lemmas to derive a tangent space distance comparison theorem for Riemannian manifolds with bounded sectional curvature.
\begin{theorem}[Multiplicative distortion of squared distance on Riemannian manifold] \label{thm:squared-distance-ratio-bound}
	Let $x^*$, $v_{k+1}$, $y_k$, $y_{k+1}\in\mathcal{X}$ be four points in a g-convex, uniquely geodesic set $\mathcal{X}$ where the sectional curvature is bounded within $[-K, K]$, for some nonnegative number $K$.
	Define $b_{k+1}=\max\left\{\|\Exp_{y_k}^{-1}(x^*)\|,\|\Exp_{y_{k+1}}^{-1}(x^*)\|\right\}$. Assume $b_{k+1}\le\frac{1}{4\sqrt{K}}$ for $K>0$ (otherwise $b_{k+1} < \infty$), then we have 
	\begin{equation}
	\|\Exp_{y_{k+1}}^{-1}(x^*)-\Exp_{y_{k+1}}^{-1}(v_{k+1})\|^2 \le (1+5K b_{k+1}^2) \|\Exp_{y_k}^{-1}(x^*)-\Exp_{y_k}^{-1}(v_{k+1})\|^2.
	\end{equation}
\end{theorem}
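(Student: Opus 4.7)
The plan is to compare the two tangent-space quantities $\bar a_k:=\|\Exp_{y_k}^{-1}(x^*)-\Exp_{y_k}^{-1}(v_{k+1})\|$ and $\bar a_{k+1}:=\|\Exp_{y_{k+1}}^{-1}(x^*)-\Exp_{y_{k+1}}^{-1}(v_{k+1})\|$ by routing each of them through the common geodesic distance $a:=d(x^*,v_{k+1})$. To do so I would apply the two preceding constant-curvature lemmas together with the classical hinge comparison theorems (Toponogov for the lower curvature bound, Alexandrov/CAT for the upper one) to the two geodesic triangles $\triangle x^* v_{k+1} y_k$ and $\triangle x^* v_{k+1} y_{k+1}$ separately, and then chain the resulting bi-Lipschitz estimates.

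For $j\in\{k,k+1\}$, let $b_j:=\|\Exp_{y_j}^{-1}(x^*)\|$, $c_j:=\|\Exp_{y_j}^{-1}(v_{k+1})\|$, and let $A_j$ be the angle at $y_j$ between these two tangent vectors. The Euclidean law of cosines in $T_{y_j}\mathcal M$ identifies $\bar a_j$ as the third side of the Euclidean comparison triangle for the hinge $(b_j,c_j,A_j)$. Since the sectional curvature of $\mathcal X$ lies in $[-K,K]$, Toponogov's hinge comparison gives $a\le a_j^{\mathrm{hyp}}$, where $a_j^{\mathrm{hyp}}$ is the third side of the same hinge realized in the simply connected model of curvature $-K$, and the Alexandrov/CAT hinge comparison gives $a\ge a_j^{\mathrm{sph}}$ for the third side in the model of curvature $+K$. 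Rescaling lengths by $\sqrt{K}$ reduces these model-space sides to the unit-curvature setups of Lemma~\ref{thm:hyperbolic-squared-distance-distortion} and Lemma~\ref{thm:hypersphere-squared-distance-distortion}; the hypothesis $\sqrt{K}\,b_j\le 1/4$ follows from $b_j\le b_{k+1}\le 1/(4\sqrt{K})$, and the spherical hypothesis $\sqrt{K}\,c_j\le \pi/2$ holds since $y_j$ and $v_{k+1}$ sit inside the uniquely geodesic set $\mathcal X$ of Assumption~\ref{assumption:1}. Combining the two hinge theorems with the two constant-curvature lemmas then sandwiches
\[
\frac{\bar a_j^2}{1+2K b_j^2} \;\le\; (a_j^{\mathrm{sph}})^2 \;\le\; a^2 \;\le\; (a_j^{\mathrm{hyp}})^2 \;\le\; (1+2K b_j^2)\,\bar a_j^2.
\]

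Chaining the upper bound at $y_{k+1}$ with the lower bound at $y_k$, and using $b_k,b_{k+1}\le b_{k+1}$ by definition of $b_{k+1}$, gives
\[
\bar a_{k+1}^2 \;\le\; (1+2K b_{k+1}^2)\,a^2 \;\le\; (1+2K b_{k+1}^2)^2\,\bar a_k^2.
\]
Finally, the standing hypothesis $K b_{k+1}^2\le 1/16$ lets us expand $(1+2K b_{k+1}^2)^2=1+4K b_{k+1}^2+4(K b_{k+1}^2)^2\le 1+(4+\tfrac{1}{4})K b_{k+1}^2\le 1+5K b_{k+1}^2$, which is exactly the advertised constant.

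The main obstacle I anticipate is orienting the two hinge comparisons correctly: one has to recognize that the lower curvature bound $-K$ is what forces $a\le a_j^{\mathrm{hyp}}$ and the upper bound $+K$ is what forces $a\ge a_j^{\mathrm{sph}}$, and that the inequality directions in Lemma~\ref{thm:hyperbolic-squared-distance-distortion} and Lemma~\ref{thm:hypersphere-squared-distance-distortion} are precisely those needed to convert the model-space sides into two-sided bounds on $\bar a_j$. A secondary technical point is verifying the $c$-hypothesis of the spherical lemma after the $\sqrt{K}$ rescaling; this is handled by the uniquely geodesic structure of $\mathcal X$ together with Assumption~\ref{assumption:4}, which keeps $v_{k+1}$ within the region where the spherical comparison triangle is well defined.
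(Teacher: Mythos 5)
Your proof is correct and follows essentially the same route as the paper: both bound $\|\Exp_{y_{k+1}}^{-1}(x^*)-\Exp_{y_{k+1}}^{-1}(v_{k+1})\|$ and $\|\Exp_{y_k}^{-1}(x^*)-\Exp_{y_k}^{-1}(v_{k+1})\|$ against the common geodesic distance $d(x^*,v_{k+1})$ via the rescaled constant-curvature lemmas together with the hinge comparison theorems, then expand $(1+2Kb_{k+1}^2)^2\le 1+5Kb_{k+1}^2$. Your hinge parametrization with $b_j=\|\Exp_{y_j}^{-1}(x^*)\|$ (the side whose square appears in the distortion factor) is also the natural one, since it makes $b_j\le b_{k+1}$ immediate from the definition.
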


\begin{proof}
	The high level idea is to think of the tangent space distance distortion on Riemannian manifolds of bounded curvature as a consequence of bi-Lipschitzness of the exponential map. Specifically, note that $\triangle y_k x^* v_{k+1}$ and $\triangle y_{k+1} x^* v_{k+1}$ are two geodesic triangles in $\mathcal{X}$, whereas $\|\Exp_{y_k}^{-1}(x^*)-\Exp_{y_k}^{-1}(v_{k+1})\|$ and $\|\Exp_{y_{k+1}}^{-1}(x^*)-\Exp_{y_{k+1}}^{-1}(v_{k+1})\|$ are side lengths of two comparison triangles in vector space. Since $\mathcal{X}$ is of bounded sectional curvature, we can apply comparison theorems.
	
	First, we consider bound on the distortion of squared distance in a Riemannian manifold with constant curvature $-K$. Note that in this case, the hyperbolic law of cosines becomes
	\[ \cosh(\sqrt{K}a) = \cosh(\sqrt{K}b)\cosh(\sqrt{K}c) - \sinh(\sqrt{K}b)\sinh(\sqrt{K}c)\cos(A), \]
	which corresponds to the geodesic triangle in hyperbolic space with side lengths $\sqrt{K}a, \sqrt{K}b,\sqrt{K}c$, with the corresponding comparison triangle in Euclidean space having lengths $\sqrt{K}\bar{a}, \sqrt{K}\bar{b}, \sqrt{K}\bar{c}$. Apply Lemma \ref{thm:hyperbolic-squared-distance-distortion} we have $(\sqrt{K}a)^2 \le (1+2(\sqrt{K}b)^2)(\sqrt{K}\bar{a})^2$, i.e. $a^2 \le (1+2K b^2)\bar{a}^2$.
	Now consider the geodesic triangle $\triangle y_k x^* v_{k+1}$. Let $\tilde{a}=\|\Exp_{v_{k+1}}^{-1}(x^*)\|, b=\|\Exp_{y_k}^{-1}(v_{k+1})\|\le b_{k+1}, c=\|\Exp_{y_k}^{-1}(x^*)\|, A=\angle x^* y_k v_{k+1}$, so that $\|\Exp_{y_k}^{-1}(x^*)-\Exp_{y_k}^{-1}(v_{k+1})\|^2 = b^2+c^2-2bc\cos(A)$. By Toponogov's comparison theorem \citep{burago2001course}, we have $\tilde{a}\le a$
	hence
	\begin{equation} \label{eq:y-k-squared-distance-ratio}
	\|\Exp_{v_{k+1}}^{-1}(x^*)\|^2 \le \left(1+2K b_{k+1}^2\right)\|\Exp_{y_k}^{-1}(x^*)-\Exp_{y_k}^{-1}(v_{k+1})\|^2.
	\end{equation}
	Similarly, using the spherical law of cosines for a space of constant curvature $K$ 
	\[ \cos(\sqrt{K}a) = \cos(\sqrt{K}b)\cos(\sqrt{K}c) + \sin(\sqrt{K}b)\sin(\sqrt{K}c)\cos(A) \]
	and Lemma \ref{thm:hypersphere-squared-distance-distortion} we can show $\bar{a}^2 \le (1+2K b^2)a^2$, where $\bar{a}$ is the  side length in Euclidean space corresponding to $a$. 
	%
	Hence by our uniquely geodesic assumption and \citep[Theorem 2.2, Remark 7]{meyer1989toponogov}, with similar reasoning for the geodesic triangle $\triangle y_{k+1} x^* v_{k+1}$, we have $a \le \|\Exp_{v_{k+1}}^{-1}(x^*)\|$, so that
	\begin{equation} \label{eq:y-k+1-squared-distance-ratio}
	\|\Exp_{y_{k+1}}^{-1}(x^*)-\Exp_{y_{k+1}}^{-1}(v_{k+1})\|^2 \le \left(1+2K b_{k+1}^2\right)a^2 \le \left(1+2K b_{k+1}^2\right)\|\Exp_{v_{k+1}}^{-1}(x^*)\|^2.
	\end{equation}
	Finally, combining inequalities (\ref{eq:y-k-squared-distance-ratio}) and (\ref{eq:y-k+1-squared-distance-ratio}), and noting that $(1+2K b_{k+1}^2)^2 = 1+4K b_{k+1}^2 + (4K b_{k+1}^2)K b^2 \le 1 + 5K b_{k+1}^2$, the proof is complete.
\end{proof}
Theorem \ref{thm:squared-distance-ratio-bound} suggests that if $b_{k+1}\le\frac{1}{4\sqrt{K}}$, we could choose $\beta\ge 5K b_{k+1}^2$ and $\gamma \le\frac{1}{1+\beta}\overline{\gamma}$ to guarantee $\Phi_{k+1}(x^*)\le \overline{\Phi}_{k+1}(x^*)$. It then follows that the analysis holds for $k$-th step. Still, it is unknown that under what conditions can we guarantee $\Phi_{k+1}(x^*)\le \overline{\Phi}_{k+1}(x^*)$ hold for all $k\ge 0$, which would lead to a convergence proof. We resolve this question in the next theorem. 

\begin{theorem}[Local fast convergence] \label{thm:convergence-induction}
	With Assumptions \ref{assumption:1}, \ref{assumption:2}, \ref{assumption:3}, \ref{assumption:4}, denote $D = \frac{1}{20\sqrt{K}}\left(\frac{\mu}{L}\right)^{\frac{3}{4}}$ and assume $\mathcal{B}_{x^*, D}:=\{x\in\mathcal{M}: d(x,x^*)\le D\} \subseteq\mathcal{X}$.
	 If we set $h=\frac{1}{L}, \beta=\frac{1}{5}\sqrt{\frac{\mu}{L}}$ and $x_0 \in \mathcal{B}_{x^*, D}$,
	then Algorithm \ref{alg:constant-step} converges; moreover, we have
	\begin{equation} \label{eq:convergence-rate}
	f(x_k)-f(x^*)\le \left(1-\frac{9}{10}\sqrt{\frac{\mu}{L}}\right)^k\left(f(x_0)-f(x^*)+\frac{\mu}{2}\|\Exp_{x_0}^{-1}(x^*)\|^2\right).
	\end{equation}
\end{theorem}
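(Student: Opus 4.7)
The plan is to prove Theorem \ref{thm:convergence-induction} by induction on $k$, with the goal of maintaining inequality (\ref{eq:base-change-assumption}) at every step so that Theorem \ref{thm:main-theorem-general-scheme} applies. My inductive hypothesis at iteration $k$ is: (a) inequality (\ref{eq:base-change-assumption}) has held at iterations $0,1,\ldots,k-1$, so (\ref{eq:convergence-algorithm-1}) applies to $x_k$; and (b) the three iterates $x_k,v_k,y_k$ all lie in the ball of radius $\rho := \tfrac{1}{5\sqrt{K}}(\mu/L)^{1/4}$ around $x^*$. The radius $\rho$ is exactly what makes $5K\rho^2 \leq \beta = \tfrac{1}{5}\sqrt{\mu/L}$, so Theorem \ref{thm:squared-distance-ratio-bound} then forces (\ref{eq:base-change-assumption}) to hold at step $k$, extending the induction by one more step. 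The base case is immediate since $v_0 = x_0 \in \mathcal{B}_{x^*,D}$ and $D < \rho$.

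For the inductive step I would first combine (\ref{eq:convergence-algorithm-1}) with geodesic smoothness $f(x_0)-f(x^*) \leq \tfrac{L}{2}d(x_0,x^*)^2$ and $\gamma_0 \leq \mu \leq L$ (so $\Phi_0(x^*)-f(x^*) \leq L\,d(x_0,x^*)^2 \leq LD^2$) to get $f(x_{k+1})-f(x^*) \leq \lambda_{k+1} L D^2$, and then strong convexity to deduce $d(x_{k+1},x^*)^2 \leq \tfrac{2L}{\mu}D^2$. For $v_{k+1}$, the estimate-sequence identities $\Phi_{k+1}^* \geq f(x_{k+1})$ and $\Phi_{k+1}(x^*) \leq (1-\lambda_{k+1})f(x^*) + \lambda_{k+1}\Phi_0(x^*)$ yield, after subtracting, the tangent-space bound
\[ \tfrac{\gamma}{2}\left\|\Exp_{y_{k+1}}^{-1}(v_{k+1}) - \Exp_{y_{k+1}}^{-1}(x^*)\right\|^2 \leq \lambda_{k+1}\bigl(\Phi_0(x^*)-f(x^*)\bigr) \leq LD^2, \]
which Lemma \ref{thm:hyperbolic-squared-distance-distortion} converts (with a factor of $1 + O(K\rho^2)$) into a geodesic distance bound $d(v_{k+1},x^*)^2 = O(LD^2/\mu)$. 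The intermediate point $y_{k+1}$ sits on the geodesic from $x_{k+1}$ to $v_{k+1}$, so $d(y_{k+1},x^*) \leq d(x_{k+1},x^*) + d(v_{k+1},x^*)$ is controlled at the same order. Plugging in $D = \tfrac{1}{20\sqrt{K}}(\mu/L)^{3/4}$ shows all three quantities stay well inside the ball of radius $\rho$, closing the induction. The explicit rate $(1-\tfrac{9}{10}\sqrt{\mu/L})^k$ then follows from $\lambda_k = (1-\alpha)^k$ combined with the lower bound $\alpha \geq \tfrac{9}{10}\sqrt{\mu/L}$ obtained by solving the quadratic $\alpha^2 = h\bigl((1-\alpha)\gamma + \alpha\mu\bigr)$ under the chosen $h=1/L$ and $\beta=\tfrac{1}{5}\sqrt{\mu/L}$.

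The main obstacle is quantitatively closing the induction. The chain of estimates above suffers a $\sqrt{L/\mu}$ blowup when passing from the function-value gap to a geodesic distance via strong convexity, and a further $1 + O(K\rho^2)$ loss when converting tangent-space distances to geodesic distances through Lemma \ref{thm:hyperbolic-squared-distance-distortion}. The particular choice $D = \tfrac{1}{20\sqrt{K}}(\mu/L)^{3/4}$ is calibrated precisely so that after absorbing these losses the iterates still fit in the ball of radius $\rho$ required by Theorem \ref{thm:squared-distance-ratio-bound}; this pairing of the scale $(\mu/L)^{3/4}$ for $D$ and $(\mu/L)^{1/4}$ for $\rho$ is what drives the specific numerical constants in the theorem. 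Threading these constants carefully is tedious but essentially mechanical, as the conceptual work has already been done by Theorems \ref{thm:main-theorem-general-scheme} and \ref{thm:squared-distance-ratio-bound}.
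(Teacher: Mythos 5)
Your overall strategy---induct on $k$ to maintain (\ref{eq:base-change-assumption}), then invoke Theorems \ref{thm:main-theorem-general-scheme} and \ref{thm:squared-distance-ratio-bound}---is indeed what the paper does, and your scaling heuristic relating $D\sim(\mu/L)^{3/4}/\sqrt{K}$ to the working radius $\sim(\mu/L)^{1/4}/\sqrt{K}$ is on target. However, there is a genuine circularity in how you bound $d(v_{k+1},x^*)$. You invoke $\Phi_{k+1}(x^*)\le(1-\lambda_{k+1})f(x^*)+\lambda_{k+1}\Phi_0(x^*)$ and then apply Lemma \ref{thm:hyperbolic-squared-distance-distortion} in the tangent space at $y_{k+1}$. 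But the first inequality is the conclusion of Lemma \ref{thm:estimate-sequence-construction} at index $k+1$, and its derivation (see Appendix \ref{prf:estimate-sequence-construction}) uses (\ref{eq:phi-less-overline-phi}), i.e.\ the comparison inequality (\ref{eq:base-change-assumption}) at step $k$---exactly what the inductive step is trying to establish. Separately, the multiplicative factor in Lemma \ref{thm:hyperbolic-squared-distance-distortion} requires a side length emanating from the base point to be small; so converting $\|\Exp_{y_{k+1}}^{-1}(v_{k+1})-\Exp_{y_{k+1}}^{-1}(x^*)\|$ into $d(v_{k+1},x^*)$ already needs a bound on $\|\Exp_{y_{k+1}}^{-1}(x^*)\|$, which is again the quantity you are in the middle of bounding.

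The paper avoids both circularities by working entirely in the tangent space at $y_k$, whose distance to $x^*$ is controlled by the inductive hypothesis. In place of $\Phi_{k+1}(x^*)$ it bounds $\overline{\Phi}_{k+1}(x^*)-\Phi_{k+1}^*=\frac{\overline{\gamma}}{2}\|\Exp_{y_k}^{-1}(x^*)-\Exp_{y_k}^{-1}(v_{k+1})\|^2$: the chain $\overline{\Phi}_{k+1}(x^*)-f(x^*)\le(1-\alpha)(\Phi_k(x^*)-f(x^*))\le\lambda_{k+1}(\Phi_0(x^*)-f(x^*))$ uses only (\ref{eq:base-change-assumption}) at steps $0,\dots,k-1$, and combined with $\Phi_{k+1}^*\ge f(x_{k+1})\ge f(x^*)$ (Lemma \ref{thm:x-k-bound}, which is independent of the comparison inequality) this bounds the $y_k$-based tangent-space quantity. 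The comparison theorem applied at base $y_k$, where the inductive hypothesis supplies $\|\Exp_{y_k}^{-1}(x^*)\|\le\frac{1}{10\sqrt{K}}(\mu/L)^{1/4}$, then yields $d(v_{k+1},x^*)$; a triangle inequality on the geodesic from $x_{k+1}$ to $v_{k+1}$ finally bounds $\|\Exp_{y_{k+1}}^{-1}(x^*)\|$, closing the induction. Replacing your $y_{k+1}$-based estimates with these $y_k$-based ones removes the gap and reduces to the paper's argument.
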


\paragraph{Proof sketch.}
Recall that in Theorem 		\ref{thm:main-theorem-general-scheme} we already establish that if the tangent space distance comparison inequality (\ref{eq:base-change-assumption}) holds, then the general Riemannian Nesterov iteration (Algorithm \ref{alg:riemannian-ag}) and hence its constant step size special case (Algorithm \ref{alg:constant-step}) converge with a guaranteed rate. By the tangent space distance comparison theorem (Theorem \ref{thm:squared-distance-ratio-bound}), the comparison inequality should hold if $y_k$ and $x^*$ are close enough. Indeed, 
we use induction to assert that with a good initialization, (\ref{eq:base-change-assumption}) holds for each step. Specifically, for every $k>0$, if $y_k$ is close to $x^*$ and the comparison inequality holds until the $(k-1)$-th step, then $y_{k+1}$ is also close to $x^*$ and the comparison inequality holds until the $k$-th step. We postpone the complete proof until Appendix \ref{prf:convergence-induction}.

\section{Discussion}
In this work, we proposed a Riemannian generalization of the accelerated gradient algorithm and developed its convergence and complexity analysis. For the first time (to the best of our knowledge), we show gradient based algorithms on Riemannian manifolds can be accelerated, at least in a neighborhood of the minimizer. Central to our analysis are the two main technical contributions of our work: a new estimate sequence (Lemma \ref{thm:estimate-sequence-lemma}), which relaxes the assumption of Nesterov's original construction and handles metric distortion on Riemannian manifolds; a tangent space distance comparison theorem (Theorem \ref{thm:squared-distance-ratio-bound}), which provides sufficient conditions for bounding the metric distortion and could be of interest for a broader range of problems on Riemannian manifolds.

Despite not matching the standard convex results, our result exposes the key difficulty of analyzing Nesterov-style algorithms on Riemannian manifolds, an aspect missing in previous work. Critically, the convergence analysis relies on bounding a new distortion term per each step. Furthermore, we observe that the side length sequence $d(y_k, v_{k+1})$ can grow much greater than $d(y_k, x^*)$, even if we reduce the ``step size'' $h_k$ in Algorithm 1, defeating any attempt to control the distortion globally by modifying the algorithm parameters. This is a benign feature in vector space analysis, since (\ref{eq:base-change-assumption}) trivially holds nonetheless; however it poses a great difficulty for analysis in nonlinear space. Note the stark contrast to (stochastic) gradient descent, where the step length can be effectively controlled by reducing the step size, hence bounding the distortion terms globally \citep{zhang2016first}.

A topic of future interest is to study whether assumption (\ref{eq:base-change-assumption}) can be further relaxed, while maintaining that overall the algorithm still converges. By bounding the squared distance distortion in every step, our analysis provides guarantee for the worst-case scenario, which seems unlikely to happen in practice. It would be interesting to conduct experiments to see how often (\ref{eq:base-change-assumption}) is violated versus how often it is loose. It would also be interesting to construct some adversarial problem case (if any) and study the complexity lower bound of gradient based Riemannian optimization, to see if geodesically convex optimization is strictly more difficult than convex optimization. Generalizing the current analysis to non-strongly g-convex functions is another interesting direction.

\section*{Acknowledgement}

The authors thank the anonymous reviewers for helpful feedback. This work was supported in part by NSF-IIS-1409802 and the DARPA Lagrange grant.

\bibliographystyle{abbrvnat}
\bibliography{main_agd17}

\newpage
\appendix

\section{Constant step scheme}
\begin{lemma}\label{thm:constant-step-scheme}
	Pick $\beta_k\equiv \beta > 0$. 
	If in Algorithm \ref{alg:riemannian-ag} we set \[ h_k\equiv h, \forall k\ge 0, \qquad \gamma_0\equiv\gamma = \frac{\sqrt{\beta^2+4(1+\beta)\mu h}-\beta}{\sqrt{\beta^2+4(1+\beta)\mu h}+\beta}\cdot \mu, \]
	then we have 
	\begin{equation}
	\alpha_k\equiv \alpha = \frac{\sqrt{\beta^2+4(1+\beta)\mu h}-\beta}{2}, \qquad \overline{\gamma}_{k+1}\equiv (1+\beta)\gamma, \qquad \gamma_{k+1}\equiv \gamma, \qquad \forall k\ge 0.
	\end{equation}
\end{lemma}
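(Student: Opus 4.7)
The plan is induction on $k \ge 0$. The base case is immediate from the hypothesis: $\gamma_0 = \gamma$ by the prescribed initialization. For the inductive step, assume $\gamma_k = \gamma$ and introduce the shorthand $S := \sqrt{\beta^2 + 4(1+\beta)\mu h}$, so that the candidate values take the clean form $\alpha = (S-\beta)/2$ and $\gamma = \frac{S-\beta}{S+\beta}\,\mu$. I would show these constants are self-consistent under one pass of Algorithm \ref{alg:riemannian-ag}; since the update rules $\overline{\gamma}_{k+1} = (1-\alpha_k)\gamma_k + \alpha_k\mu$ and $\gamma_{k+1} = \overline{\gamma}_{k+1}/(1+\beta)$ are deterministic once $\alpha_k$ is fixed, this is enough to propagate the claim to all subsequent $k$.

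The single nontrivial step is verifying that $\alpha_k = \alpha$ is the unique root in $(0,1)$ of the defining equation $\alpha_k^2 = h\big((1-\alpha_k)\gamma + \alpha_k \mu\big)$. I would first compute the mixture $(1-\alpha)\gamma + \alpha\mu$ using the explicit form of $\gamma$; combining fractions over $S+\beta$ collapses it to $(1+\beta)\gamma$, which already establishes that $\overline{\gamma}_{k+1} = (1+\beta)\gamma$ once the value of $\alpha_k$ is confirmed. The equation to verify then reduces to $\alpha^2 = h(1+\beta)\gamma$, and substituting the closed forms of $\alpha$ and $\gamma$ strips it down to the identity $(S-\beta)(S+\beta) = 4(1+\beta)\mu h$, which is precisely the definition of $S^2$. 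Uniqueness of $\alpha$ in $(0,1)$ then follows because the quadratic $\alpha_k^2 + h(\gamma - \mu)\alpha_k - h\gamma = 0$ has roots of opposite sign (their product is $-h\gamma < 0$), and the positive root lies below $1$ whenever $\mu h \le 1$, which is guaranteed by the standing conditions $h \le 1/L$ and $\mu \le L$.

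The two remaining equalities then fall out for free: $\overline{\gamma}_{k+1} = (1+\beta)\gamma$ was established along the way, and $\gamma_{k+1} = \overline{\gamma}_{k+1}/(1+\beta) = \gamma$, closing the induction. I do not expect any conceptual obstacle here; the lemma is a calibration statement whose whole content is the identity $S^2 - \beta^2 = 4(1+\beta)\mu h$, which was built into the definitions of $\gamma$ and $\alpha$ precisely so that $\gamma$ becomes a fixed point of the one-step recursion $\gamma \mapsto \big((1-\alpha(\gamma))\gamma + \alpha(\gamma)\mu\big)/(1+\beta)$ generated by the algorithm.
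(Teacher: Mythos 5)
Your proposal is correct and follows essentially the same approach as the paper: induction on $k$, with the inductive step reduced to an algebraic identity coming from the defining quadratic for $\alpha_k$. The only presentational difference is that you verify the candidate $\alpha = (S-\beta)/2$ satisfies $\alpha_k^2 = h\big((1-\alpha_k)\gamma + \alpha_k\mu\big)$ directly and appeal to uniqueness of the root in $(0,1)$ (via the negative product of roots), whereas the paper applies the quadratic formula and simplifies using the auxiliary identities $\mu - \gamma = \frac{\beta\alpha}{(1+\beta)h}$ and $\gamma = \frac{\alpha^2}{(1+\beta)h}$; both routes pivot on the same identity $S^2 - \beta^2 = 4(1+\beta)\mu h$. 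Your explicit uniqueness argument is a small addition the paper leaves implicit, and your remark that $(1-\alpha)\gamma + \alpha\mu = (1+\beta)\gamma$ already yields the $\overline{\gamma}_{k+1}$ claim is a nice economy.
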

\begin{proof}
	Suppose that $\gamma_k=\gamma$, then from Algorithm \ref{alg:riemannian-ag} we have $\alpha_k$ is the positive root of
	\[ \alpha_k^2-(\mu-\gamma)h\alpha_k - \gamma h = 0. \]
	Also note
	\begin{equation} \label{eq:mu-and-gamma-0}
	\mu - \gamma = \frac{\beta\alpha}{(1+\beta)h}, \text{~~~~and~~~~} \gamma = \frac{\alpha^2}{(1+\beta)h},
	\end{equation}
	hence
	\begin{align*}
	\alpha_k = &~ \frac{(\mu-\gamma)h + \sqrt{(\mu-\gamma)^2h^2 + 4\gamma h}}{2} \\
	= &~ \frac{\beta\alpha}{2(1+\beta)} + \frac{1}{2}\sqrt{\frac{\beta^2\alpha^2}{(1+\beta)^2} + \frac{4\alpha^2}{1+\beta}}  \\
	= &~ \alpha
	\end{align*}
	Furthermore, we have
	\begin{align*}
	\overline{\gamma}_{k+1} = &~ (1-\alpha_k)\gamma_k + \alpha_k\mu = (1-\alpha)\gamma + \alpha\mu \\
	= &~ \gamma + (\mu-\gamma)\alpha = \gamma + \beta\frac{\alpha^2}{(1+\beta)h} \\
	= &~ (1+\beta)\gamma
	\end{align*}
	and $\gamma_{k+1} = \frac{1}{1+\beta}\overline{\gamma}_{k+1} = \gamma$. Since $\gamma_k=\gamma$ holds for $k=0$, by induction the proof is complete.
\end{proof}

\section{Proof of Lemma \ref{thm:estimate-sequence-construction}} \label{prf:estimate-sequence-construction}
\begin{proof}
	The proof is similar to~\citep[Lemma 2.2.2]{nesterov2004introductory} except that we introduce $\overline{\Phi}_{k+1}$ as an intermediate step in constructing $\Phi_{k+1}(x)$. In fact, to start we have $\Phi_0(x)\le (1-\lambda_0)f(x) + \lambda_0\Phi_0(x)\equiv\Phi_0(x)$. Moreover, assume (\ref{eq:weak-estimate-sequence-definition}) holds for some $k\ge 0$, i.e. $\Phi_k(x^*)-f(x^*)\le \lambda_k(\Phi_0(x^*)-f(x^*))$, then
	\begin{align*}
	\Phi_{k+1}(x^*) - f(x^*) \le &~ \overline{\Phi}_{k+1}(x^*) - f(x^*) \\
	\le &~ (1-\alpha_k)\Phi_k(x^*) + \alpha_k f(x^*) - f(x^*) \\
	= &~ (1-\alpha_k)(\Phi_k(x^*)-f(x^*)) \\
	\le &~ (1-\alpha_k)\lambda_k(\Phi_0(x^*)-f(x^*)) \\
	= &~ \lambda_{k+1}(\Phi_0(x^*)-f(x^*)),
	\end{align*}
	where the first inequality is due to our construction of $\Phi_{k+1}(x)$ in (\ref{eq:phi-less-overline-phi}), the second inequality due to strong convexity of $f$.
	By induction we have $\Phi_k(x^*)\le (1-\lambda_k)f(x^*) + \lambda_k\Phi_0(x^*)$ for all $k\ge 0$. It remains to note that condition \ref{eq:alpha-k-not-summable} ensures $\lambda_k\to 0$.
\end{proof}

\section{Proof of Lemma \ref{thm:complete-square}} \label{prf:complete-square}
\begin{proof}
	We prove this lemma by completing the square:
	\begin{align*}
	\overline{\Phi}_{k+1}(x) = &~\left(1 - \alpha_k\right)\left(\Phi_{k}^* + \frac{\gamma_k}{2}\|\Exp_{y_k}^{-1}(x) - \Exp_{y_k}^{-1}(v_k)\|^2\right) \\
	&~ + \alpha_k \left(f(y_k) + \langle \nabla f(y_k), \Exp_{y_k}^{-1}(x)\rangle + \frac{\mu}{2}\|\Exp_{y_k}^{-1}(x)\|^2\right) \\
	= &~ \frac{\overline{\gamma}_{k+1}}{2}\|\Exp_{y_k}^{-1}(x)\|^2 + \left\langle \alpha_k\nabla f(y_k) - \left(1-\alpha_k \right)\gamma_k \Exp_{y_k}^{-1}(v_k), \Exp_{y_k}^{-1}(x)\right\rangle  \\
	&~ + \left(1 - \alpha_k\right) \left(\Phi_k^* + \frac{\gamma_k}{2}\|\Exp_{y_k}^{-1}(v_k)\|^2 \right) + \alpha_k f(y_k) \\
	= &~ \frac{\overline{\gamma}_{k+1}}{2}\left\|\Exp_{y_k}^{-1}(x) - \left(\frac{(1-\alpha_k)\gamma_k}{\overline{\gamma}_{k+1}} \Exp_{y_k}^{-1}(v_k) - \frac{\alpha_k}{\overline{\gamma}_{k+1}} \nabla f(y_k) \right) \right\|^2 + \Phi_{k+1}^* \\
	= &~ \Phi_{k+1}^* +  \frac{\overline{\gamma}_{k+1}}{2}\left\|\Exp_{y_k}^{-1}(x) - \Exp_{y_k}^{-1}(v_{k+1}) \right\|^2
	\end{align*}
	where the third equality is by completing the square with respect to $\Exp_{y_k}^{-1}(x)$ and use the definition of $\Phi_{k+1}^*$ in (\ref{eq:phi-k+1-star}), the last equality is by the definition of $y_k$ in Algorithm \ref{alg:riemannian-ag}, and $\overline{\Phi}_{k+1}(x)$ is minimized if and only if $x= \Exp_{y_k}\left(\frac{(1-\alpha_k)\gamma_k}{\overline{\gamma}_{k+1}} \Exp_{y_k}^{-1}(v_k) - \frac{\alpha_k}{\overline{\gamma}_{k+1}} \nabla f(y_k)\right) = v_{k+1}$.
\end{proof}

\section{Proof of Lemma \ref{thm:x-k-bound}} \label{prf:x-k-bound}
\begin{proof}
	For $k=0$, $\Phi_k^*\ge f(x_k)$ trivially holds. Assume for iteration $k$ we have $\Phi_k^*\ge f(x_k)$, then from definition (\ref{eq:phi-k+1-star}) we have
	\begin{align*}
	\Phi_{k+1}^* \ge &~ \left(1 - \alpha_k\right) f(x_k) + \alpha_k f(y_k) - \frac{\alpha_k^2}{2\overline{\gamma}_{k+1}}\|\nabla f(y_k)\|^2 + \frac{\alpha_k(1-\alpha_k)\gamma_k}{\overline{\gamma}_{k+1}}\langle \nabla f(y_k), \Exp_{y_k}^{-1}(v_k)\rangle \\
	\ge &~ f(y_k) - \frac{\alpha_k^2}{2\overline{\gamma}_{k+1}}\|\nabla f(y_k)\|^2 + (1-\alpha_k)\left\langle \nabla f(y_k), \frac{\alpha_k\gamma_k}{\overline{\gamma}_{k+1}}\Exp_{y_k}^{-1}(v_k) + \Exp_{y_k}^{-1}(x_k)\right\rangle \\
	= &~ f(y_k) - \frac{\alpha_k^2}{2\overline{\gamma}_{k+1}}\|\nabla f(y_k)\|^2 \\
	= &~ f(y_k) - \frac{h_k}{2}\|\nabla f(y_k)\|^2,
	\end{align*}
	where the first inequality is due to $\Phi_k^*\ge f(x_k)$, the second due to $f(x_k)\ge f(y_k) + \langle\nabla f(y_k), \Exp_{y_k}^{-1}(x_k)\rangle$ by g-convexity, and the equalities follow from Algorithm \ref{alg:riemannian-ag}. On the other hand, we have the bound
	\begin{align*}
	f(x_{k+1}) \le &~ f(y_k) + \langle\nabla f(y_k), \Exp_{y_k}^{-1}(x_{k+1})\rangle + \frac{L}{2}\|\Exp_{y_k}^{-1}(x_{k+1})\|^2 \\
	= &~ f(y_k) - h_k\left(1 - \frac{L h_k}{2}\right)\|\nabla f(y_k)\|^2 \\
	\le &~ f(y_k) - \frac{h_k}{2}\|\nabla f(y_k)\|^2 \le \Phi_{k+1}^*,
	\end{align*}
	where the first inequality is by the $L$-smoothness assumption, the equality from the definition of $x_{k+1}$ in Algorithm \ref{alg:riemannian-ag} Line \ref{eq:x-k+1}, and the second inequality from the assumption that $h_k\le \frac{1}{L}$. Hence by induction, $\Phi_k^*\ge f(x_k)$ for all $k\ge 0$. 
\end{proof}

\section{Proof of Lemma \ref{thm:hyperbolic-squared-distance-distortion}}
\begin{lemma} \label{thm:large-c-hyperbolic}
	Let $a,b,c$ be the side lengths of a geodesic triangle in a hyperbolic space with constant sectional curvature $-1$, and $A$ is the angle between sides $b$ and $c$. Furthermore, assume $b\le\frac{1}{4},c\ge\frac{1}{2}$. Let $\triangle\bar{a}\bar{b}\bar{c}$ be the comparison triangle in Euclidean space, with $\bar{b}=b,\bar{c}=c,\bar{A}=A$, then
	\begin{equation}
	a^2\le (1+2b^2)\bar{a}^2
	\end{equation}
\end{lemma}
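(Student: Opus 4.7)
The natural tool is the hyperbolic law of cosines, which I prefer to write in the symmetric form
\[
\cosh(a) = \cosh(c-b) + 2\sinh(b)\sinh(c)\sin^2(A/2),
\]
obtained via the identity $\cosh(b)\cosh(c)-\sinh(b)\sinh(c)=\cosh(c-b)$. This makes the right-hand side manifestly monotone in $\sin^2(A/2)\in[0,1]$, sparing a case split on the sign of $\cos A$. Its Euclidean counterpart is $\bar{a}^{\,2}=(c-b)^2+4bc\sin^2(A/2)$.

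My plan is to Taylor-expand $\cosh(a)$ around $c-b$ rather than around $0$, so that the potentially large quantity $c$ enters only through the exact values $\cosh(c-b)$ and $\sinh(c-b)$ while only the small quantity $\delta := a-(c-b)$ is expanded. The nonnegativity $\delta \ge 0$ is the first (well known) inequality of Lemma \ref{thm:hyperbolic-squared-distance-distortion}, i.e.\ Toponogov in negative curvature. A second-order Taylor expansion of $\cosh$ gives
\[
\sinh(c-b)\,\delta + \tfrac{1}{2}\cosh(\xi)\,\delta^2 = 2\sinh(b)\sinh(c)\sin^2(A/2)
\]
for some $\xi \in [c-b,a]$. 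Using $\cosh(\xi) \ge \cosh(c-b)$ turns this into a solvable quadratic inequality for $\delta$, yielding an explicit upper bound $\delta \le \delta_+(b,c,A)$; multiplying by $a+(c-b) \le 2(c-b)+\delta_+$ then produces an upper bound on $a^2-(c-b)^2$.

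The target inequality $a^2 \le (1+2b^2)\bar{a}^{\,2}$ rearranges, using $\bar{a}^{\,2}-(c-b)^2 = 4bc\sin^2(A/2)$, to
\[
a^2-(c-b)^2 \le 4bc\sin^2(A/2) + 2b^2\bar{a}^{\,2},
\]
so the proof reduces to absorbing the various $O(b^2)$ discrepancies---those coming from $\sinh(b) \ne b$, from the ratio $\sinh(c)/\sinh(c-b) \ne 1$, and from the quadratic term in $\delta$---into the slack $2b^2\bar{a}^{\,2}$. The hypotheses $b \le 1/4$ and $c \ge 1/2$ are exactly what makes this work: they force $\bar{a} \ge c-b \ge 1/4$, keeping the slack of order unity, and they bound the awkward ratio $\sinh(c)/\sinh(c-b) = \cosh(b)+\sinh(b)\coth(c-b)$ by a fixed constant (attained near $c=1/2$, $b=1/4$, and decreasing monotonically to $e^{1/4}$ as $c\to\infty$).

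The main obstacle will be bookkeeping these constants carefully enough to close the inequality. A revealing stress test is the degenerate configuration $b = 1/4$, $c = 1/2$, $A = \pi$, where $\cos A = -1$ collapses the law of cosines to $\cosh(a) = \cosh(b+c)$, giving $a = \bar{a} = 3/4$ exactly; here the naive linear mean-value bound $\delta \le 2\sinh(b)\sinh(c)\sin^2(A/2)/\sinh(c-b)$ is too loose by a factor of roughly $2$, so retaining the quadratic term in $\delta$ is not optional. I expect the cleanest route is to split into a large-$c$ regime (where $\sinh(c)/\sinh(c-b) \to e^b$ and the bound linearizes cleanly, matching the Euclidean principal term up to an $O(b^2)$ residue) and a bounded-$c$ regime $c\in[1/2,C_0]$ (where everything is uniformly bounded and the estimate closes by a short continuity/compactness argument on the explicit expressions).
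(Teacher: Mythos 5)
Your approach diverges completely from the paper's and, as written, has a genuine gap: the proof is not carried through. The paper's argument is very short. It invokes an existing trigonometric distance comparison (\citep[Lemma 5]{zhang2016first} with $\kappa=-1$), which gives the one-line bound
\[
a^2 \;\le\; \frac{c}{\tanh c}\,b^2 + c^2 - 2bc\cos A,
\]
so that $a^2-\bar{a}^2 \le \bigl(\tfrac{c}{\tanh c}-1\bigr)b^2$. It then remains only to observe that under $b\le\tfrac14$, $c\ge\tfrac12$ one has $\bar{a}^2 \ge (c-b)^2$ and $2(c-b)^2 \ge \tfrac{c}{\tanh c}-1$, which yields $a^2-\bar{a}^2 \le 2b^2\bar{a}^2$ directly. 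That is the whole proof; no Taylor expansion of the law of cosines, no quadratic in $\delta$, no regime split.

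Your plan --- rewriting the law of cosines symmetrically, expanding $\cosh$ around $c-b$, bounding $\delta=a-(c-b)$ by the positive root of a quadratic, and then absorbing the $O(b^2)$ residues into the slack $2b^2\bar{a}^2$ --- is directionally sound, and the stress test at $b=\tfrac14,\ c=\tfrac12,\ A=\pi$ is a good sanity check (and correctly diagnoses that the linear mean-value bound is off by roughly a factor of two, so the quadratic term in $\delta$ must be kept). However, the argument stops exactly where the work is: the ``bookkeeping'' of constants that would close the inequality is never done, and the proposed fallback of a ``continuity/compactness argument on the explicit expressions'' for the regime $c\in[\tfrac12,C_0]$ is not a proof but a placeholder --- compactness alone establishes nothing without a concrete verification that the inequality holds at each point, which is precisely what is being asked. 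Because the lemma demands the specific constant $1+2b^2$, the slack is of order $b^2$ and the bound is genuinely tight near $b=\tfrac14$, $c=\tfrac12$; a hand-wave cannot substitute for explicit estimates there. If you want to complete your route, you should write $\delta_+$ explicitly, bound $\delta_+(2(c-b)+\delta_+) - 4bc\sin^2(A/2)$ by $2b^2\bar{a}^2$ using monotonicity in $c$ and in $\sin^2(A/2)$, and verify the endpoints; alternatively, simply use the Zhang--Sra trigonometric distance lemma, which short-circuits the entire calculation.
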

\begin{proof}
	We first apply \citep[Lemma 5]{zhang2016first} with $\kappa=-1$ to get 
	\begin{equation*}
	a^2 \le \frac{c}{\tanh(c)}b^2 + c^2 - 2bc\cos(A).
	\end{equation*}
	We also have
	\[ \bar{a}^2 = b^2 + c^2 - 2bc\cos(A). \]
	Hence we get
	\[ a^2-\bar{a}^2\le \left(\frac{c}{\tanh(c)}-1\right)b^2. \]
	It remains to note that for $b\le\frac{1}{4},c\ge\frac{1}{2}$,
	\[ 2a^2\ge 2(c-b)^2\ge 2\left(c-\frac{1}{4}\right)\ge\frac{c}{\tanh(1/2)}-1\ge\frac{c}{\tanh(c)}-1, \]
	which implies $a^2\le(1+2b^2)\bar{a}^2$.
\end{proof}

\begin{lemma} \label{thm:small-c-hyperbolic}
	Let $a,b,c$ be the side lengths of a geodesic triangle in a hyperbolic space with constant sectional curvature $-1$, and $A$ is the angle between sides $b$ and $c$. Furthermore, assume $b\le\frac{1}{4},c\le\frac{1}{2}$. Let $\triangle\bar{a}\bar{b}\bar{c}$ be the comparison triangle in Euclidean space, with $\bar{b}=b,\bar{c}=c,\bar{A}=A$, then
	\begin{equation}
	a^2\le (1+b^2)\bar{a}^2
	\end{equation}
\end{lemma}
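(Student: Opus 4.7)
The plan is to compare the hyperbolic and Euclidean laws of cosines via a direct Taylor argument, since in the small-$c$ regime the trick from Lemma~\ref{thm:large-c-hyperbolic} (absorbing $c\coth c - 1$ into $\bar a^2 \gtrsim c$) no longer applies; here $\bar a$ itself may be arbitrarily small, so the bound must be proved with extra care.

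I would first recast the law of cosines in a symmetric half-angle form. Using $\cosh(b\pm c) = \cosh b\cosh c \pm \sinh b\sinh c$, one obtains $\cosh a = \sin^2(A/2)\cosh(b+c) + \cos^2(A/2)\cosh(b-c)$, whose Euclidean analog is $\bar a^2 = \sin^2(A/2)(b+c)^2 + \cos^2(A/2)(b-c)^2$. Expanding $\cosh$ in power series and inverting through $(\mathrm{arcsinh}(t))^2 = t^2 - t^4/3 + O(t^6)$, I would match powers of $b, c$: after the quadratic cancellation $a^2 = \bar a^2$, the quartic contribution simplifies via the algebraic identity $\sin^2(A/2)(b+c)^4 + \cos^2(A/2)(b-c)^4 - \bar a^4 = 4 b^2 c^2 \sin^2 A$ to yield
\[ a^2 - \bar a^2 = \tfrac{1}{3}\, b^2 c^2 \sin^2 A + (\text{higher order}). \]
Combining this with the Euclidean identity $\bar a^2 = (b - c\cos A)^2 + (c\sin A)^2 \ge (c\sin A)^2$ bounds the leading correction by $\tfrac{1}{3}\, b^2\bar a^2$, leaving a comfortable factor-of-three cushion against what follows.

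The main obstacle is controlling the higher-order remainder uniformly throughout the range $b\le 1/4$, $c\le 1/2$, especially when $\bar a$ is not bounded away from zero. A clean workaround is to split into two subregimes. First, if $\bar a^2 \ge c^2/3$, then Lemma~5 of \citep{zhang2016first} (as used in Lemma~\ref{thm:large-c-hyperbolic}) gives $a^2 \le \bar a^2 + (c\coth c - 1) b^2$, and the estimate $c\coth c - 1 \le c^2/3$ (valid for $c\le 1/2$ by the alternating series $c\coth c = 1 + c^2/3 - c^4/45 + \cdots$) yields $a^2 \le \bar a^2 + (c^2/3)\, b^2 \le (1+b^2)\bar a^2$. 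Second, if $\bar a^2 < c^2/3$, the elementary inequality $\bar a^2 \ge (b-c)^2$ forces $(b-c)^2 < c^2/3$, hence $c < b/(1 - 1/\sqrt 3)$ so that $b$ and $c$ are comparable. In this subregime, monotonicity of $y \mapsto \sinh^2(y)/y^2$ applied to the half-angle identity $\sinh^2(a/2) = \sin^2(A/2)\sinh^2((b+c)/2) + \cos^2(A/2)\sinh^2((b-c)/2)$ yields $a^2/\bar a^2 \le \sinh^2((b+c)/2)/((b+c)/2)^2$, and the constraints $(b+c)/2 \le b(5+\sqrt 3)/4$ and $(b+c)/2 \le 3/8$, combined with the expansion $\sinh^2(y)/y^2 = 1 + y^2/3 + 2y^4/45 + \cdots$, give the required $\le 1 + b^2$.
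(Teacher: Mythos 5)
Your proof is correct, and it takes a genuinely different route from the paper. The paper's argument is a brute-force Taylor comparison: it expands both $\cosh a$ (via the hyperbolic law of cosines) and $\cosh \bar a$ in double power series in $b,c$, matches coefficients term-by-term using two multinomial identities, bounds the resulting coefficient differences by $\sin^2 A$ times combinatorial factors, and sums the series to get $\cosh a - \cosh\bar a \le \tfrac12 b^2 \bar a^2$, after which $\cosh a - \cosh\bar a \ge \tfrac12 (a^2-\bar a^2)$ closes the argument. Your proof instead splits on whether $\bar a^2 \ge c^2/3$. In the large-$\bar a$ subregime you reuse exactly the same comparison lemma the paper invokes for Lemma~\ref{thm:large-c-hyperbolic} ($a^2 \le \bar a^2 + (c\coth c - 1)b^2$), paired with the sharper bound $c\coth c - 1 \le c^2/3$ valid on the smaller range $c \le 1/2$; in the small-$\bar a$ subregime you observe that $b$ and $c$ become comparable and pass to the symmetrized half-angle form $\sinh^2(a/2) = \sin^2(A/2)\sinh^2\bigl(\tfrac{b+c}{2}\bigr) + \cos^2(A/2)\sinh^2\bigl(\tfrac{b-c}{2}\bigr)$, which together with monotonicity of $y \mapsto \sinh^2(y)/y^2$ and $\sinh(a/2) \ge a/2$ yields $a^2/\bar a^2 \le \sinh^2(y)/y^2$ for $y=(b+c)/2$. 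What your approach buys is a much shorter, more structural argument that avoids the multinomial bookkeeping of Lemma~\ref{thm:multinomial-identities} and makes visible \emph{why} the bound is true (the excess is of order $b^2 c^2 \sin^2 A$, which is dominated by $b^2\bar a^2$ whenever $\bar a$ is not too small relative to $c$, and when $\bar a$ is small, $b\asymp c$ so the distortion factor is controlled by the side-length ratio alone). What the paper's approach buys is an explicit coefficient-by-coefficient comparison that is self-contained and uniform rather than split into cases. One minor caveat: the final numerical step in your second subregime is left implicit; it does hold since $(5+\sqrt3)^2/48 \approx 0.944$ and the higher-order factor of the series is at most $\sum_{k\ge 0} \bigl(\tfrac{2y^2}{15}\bigr)^k$-type tail for $y \le 3/8$, giving a product safely below $1$, but this should be spelled out in a final write-up.
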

\begin{proof}
	Recall the law of cosines in Euclidean space and hyperbolic space:
	\begin{align}
	\label{eq:euclidean_law_of_cosines} \bar{a}^2 = &~ \bar{b}^2 + \bar{c}^2 - 2\bar{b}\bar{c}\cos\bar{A}, \\
	\label{eq:hyperbolic_law_of_cosines} \cosh a = &~ \cosh b \cosh c - \sinh b \sinh c \cos A,
	\end{align}
	and the Taylor series expansion:
	\begin{align} 
		\label{eq:taylor_hyperbolic}
		\cosh x = &~ \sum_{n=0}^{\infty} \frac{1}{(2n)!}x^{2n},  & \sinh x = &~ \sum_{n=0}^{\infty} \frac{1}{(2n+1)!}x^{2n+1}.
	\end{align}
	We let $\bar{b}=b,\bar{c}=c,\bar{A}=A$, from Eq. (\ref{eq:euclidean_law_of_cosines})  we have
	\begin{equation} \label{eq:cosh_a_bar}
	\cosh\bar{a} =  \cosh\left(\sqrt{b^2 + c^2 - 2bc\cos A}\right)
	\end{equation}
	It is widely known that $\bar{a}\le a$.
	Now we use Eq. (\ref{eq:taylor_hyperbolic}) to expand the RHS of Eq. (\ref{eq:hyperbolic_law_of_cosines}) and Eq. (\ref{eq:cosh_a_bar}), and compare the  coefficients for each corresponding term $b^ic^j$ in the two series. Without loss of generality, we assume $i\ge j$; the results for condition $i<j$ can be easily obtained by the symmetry of $b,c$. We expand Eq. (\ref{eq:hyperbolic_law_of_cosines}) as
	\begin{align*}
	\cosh a = &~\left(\sum_{n=0}^{\infty} \frac{1}{(2n)!}b^{2n}\right)\left(\sum_{n=0}^{\infty} \frac{1}{(2n)!}c^{2n}\right) \\
	&~ - \left(\sum_{n=0}^{\infty} \frac{1}{(2n+1)!}b^{2n+1}\right)\left(\sum_{n=0}^{\infty} \frac{1}{(2n+1)!}c^{2n+1}\right)\cos A 
	\end{align*}
	where the coefficient $\alpha(i,j)$ of $b^ic^j$ is 
	\begin{equation}
	\alpha(i,j) = \left\{\begin{array}{rl}
	\frac{1}{(2p)!(2q)!}, & \text{if~} p,q\in\mathbb{N} \text{~and~} i=2p,j=2q, \\
	\frac{\cos A}{(2p+1)!(2q+1)!}, & \text{if~} p,q\in\mathbb{N} \text{~and~} i=2p+1,j=2q+1, \\
	0, & \text{otherwise.}
	\end{array}\right.
	\end{equation}
	Similarly, we expand Eq. (\ref{eq:cosh_a_bar}) as
	\begin{align*}
	\cosh\bar{a} = &~ \sum_{n=0}^{\infty} \frac{1}{(2n)!} \left(b^2+c^2-2bc\cos A\right)^n
	\end{align*}
	where the coefficient $\bar{\alpha}(i,j)$ of $b^ic^j$ is
	\begin{equation}
	\bar{\alpha}(i,j) = \left\{\begin{array}{rl}
	\frac{\sum_{k=0}^{q} \binom{p+q}{p-k, q-k, 2k}(2\cos A)^{2k}}{(2p+2q)!} , & \text{if~} p,q\in\mathbb{N} \text{~and~} i=2p,j=2q, \\
	\frac{\sum_{k=0}^{q} \binom{p+q+1}{p-k, q-k, 2k+1}(2\cos A)^{2k+1}}{(2p+2q+2)!} , & \text{if~} p,q\in\mathbb{N} \text{~and~} i=2p+1,j=2q+1, \\
	0, & \text{otherwise.}
	\end{array}\right.
	\end{equation}
	We hence calculate their absolute difference
	\begin{align*}
	& |\alpha(i,j) - \bar{\alpha}(i,j)|  \\ 
	= & \left\{\begin{array}{rl}
	\frac{\sum_{k=0}^{q} \binom{p+q}{p-k, q-k, 2k}2^{2k}\left(1-(\cos A)^{2k}\right)}{(2p+2q)!} , & \text{if~} p,q\in\mathbb{N} \text{~and~} i=2p,j=2q, \\
	\frac{\sum_{k=0}^{q} \binom{p+q+1}{p-k, q-k, 2k+1}2^{2k+1}\left(1-(\cos A)^{2k}\right)|\cos A|}{(2p+2q+2)!} , & \text{if~} p,q\in\mathbb{N} \text{~and~} i=2p+1,j=2q+1, \\
	0, & \text{otherwise.}
	\end{array}\right. \\
	\le & \left\{\begin{array}{rl}
	\frac{\sum_{k=0}^{q} \binom{p+q}{p-k, q-k, 2k}2^{2k}k}{(2p+2q)!} \sin^2 A, & \text{if~} p,q\in\mathbb{N} \text{~and~} i=2p,j=2q, \\
	\frac{\sum_{k=0}^{q} \binom{p+q+1}{p-k, q-k, 2k+1}2^{2k+1}k}{(2p+2q+2)!} \sin^2 A, & \text{if~} p,q\in\mathbb{N} \text{~and~} i=2p+1,j=2q+1, \\
	0, & \text{otherwise.}
	\end{array}\right. \\
	\le & \left\{\begin{array}{rl}
	\frac{q\sum_{k=0}^{q} \binom{p+q}{p-k, q-k, 2k}2^{2k}}{(2p+2q)!} \sin^2 A, & \text{if~} p,q\in\mathbb{N} \text{~and~} i=2p,j=2q, \\
	\frac{q\sum_{k=0}^{q} \binom{p+q+1}{p-k, q-k, 2k+1}2^{2k+1}}{(2p+2q+2)!} \sin^2 A, & \text{if~} p,q\in\mathbb{N} \text{~and~} i=2p+1,j=2q+1, \\
	0, & \text{otherwise.}
	\end{array}\right. \\
	\numberthis \label{eq:alpha-absolute-difference} = & \left\{\begin{array}{rl}
	\frac{q}{(2p)!(2q)!} \sin^2 A, & \text{if~} p,q\in\mathbb{N} \text{~and~} i=2p,j=2q, \\
	\frac{q}{(2p+1)!(2q+1)!} \sin^2 A, & \text{if~} p,q\in\mathbb{N} \text{~and~} i=2p+1,j=2q+1, \\
	0, & \text{otherwise.}
	\end{array}\right.
	\end{align*}
	where the two equalities are due to Lemma \ref{thm:multinomial-identities}, the first inequality due to the following fact
	\begin{align*}
		1 - (\cos A)^{2m} = &~ \left(1-(\cos A)^2\right)\left(1+(\cos A)^2+(\cos A)^4+\cdots+(\cos A)^{2(m-1)}\right) \\
	= &~ \sin^2 A \left(1+(\cos A)^2+(\cos A)^4+\cdots+(\cos A)^{2(m-1)}\right) \le m\sin^2 A
	\end{align*}
	By setting $q=0$, we see that in the Taylor series of $\cosh a - \cosh\bar{a}$, any term that does not include a factor of $c^2$ cancels out. By the symmetry of $b,c$, any term that does not include a factor of $b^2$ also cancels out. The term with the lowest order of power is thus $\frac{1}{4}b^2c^2\sin^2A$. Since we have $c\le \frac{1}{2}, b\le \frac{1}{4}$, the terms $|\alpha(i,j) - \bar{\alpha}(i,j)|b^ic^j$ must satisfy
	\begin{align*}
	\sum_{i,j}|\alpha(i,j) - \bar{\alpha}(i,j)|b^ic^j \le &~ \left(\frac{1}{4}+\sum_{\substack{i+j=2k, \\i,j\ge 2, k\ge 3}} \frac{i+j}{2(i!)(j!)}\frac{1}{2^{2k-4}}\right) b^2c^2\sin^2A \\
	\le &~ \left(\frac{1}{4} + \sum_{k\ge 3} \frac{1}{2^{2k-3}}\right) b^2c^2\sin^2A
	\le \frac{1}{2} b^2c^2\sin^2A \\
	= &~ \frac{1}{2} b^2\bar{a}^2\sin^2C
	\le \frac{1}{2}\bar{a}^2b^2
	\end{align*}
	where the first inequality follows from Eq. (\ref{eq:alpha-absolute-difference}) and is due to $\min(p,q)\le \frac{i+j}{2}$, the second inequality is due to $\sum_{\substack{i+j=2k\\i\ge 2,j\ge 2}}\frac{i+j}{(i!)(j!)} \le \frac{(2k)^2}{(k!)^2} \le 1$ for $k\ge 3$ and the last equality is due to Euclidean law of sines. We thus get
	\begin{equation}
	\cosh a - \cosh\bar{a} \le \sum_{i,j}|\alpha(i,j) - \bar{\alpha}(i,j)|b^ic^j\sin^2A 
	\le \frac{1}{2} b^2\bar{a}^2
	\end{equation}
	On the other hand, from the Taylor series of $\cosh$ we have
	\[ \cosh a - \cosh\bar{a} = \sum_{n=0}^{\infty}\frac{a^{2n}-\bar{a}^{2n}}{(2n)!}\ge\frac{1}{2}(a^2-\bar{a}^2), \]
	hence $a^2\le (1+b^2)\bar{a}^2$.
\end{proof}
\begin{lemma}[Two multinomial identities] \label{thm:multinomial-identities}
	For $p,q\in\mathbb{N}, p\ge q$, we have
	\begin{align}
	\frac{(2p+2q)!}{(2p)!(2q)!} = &~ \sum_{k=0}^{q} \binom{p+q}{p-k, q-k, 2k}2^{2k} \\
	\frac{(2p+2q+2)!}{(2p+1)!(2q+1)!} = &~ \sum_{k=0}^{q} \binom{p+q+1}{p-k, q-k, 2k+1}2^{2k+1}
	\end{align}
\end{lemma}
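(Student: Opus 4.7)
The plan is to prove both identities simultaneously by a coefficient-extraction argument based on the factorization $(1+x)^{2m} = \bigl((1+x)^2\bigr)^{m} = (1+2x+x^2)^{m}$. On the left-hand sides I would first rewrite the quotients as central binomial coefficients: $\frac{(2p+2q)!}{(2p)!(2q)!} = \binom{2p+2q}{2q}$ is the coefficient of $x^{2q}$ in $(1+x)^{2p+2q}$, and likewise $\frac{(2p+2q+2)!}{(2p+1)!(2q+1)!} = \binom{2p+2q+2}{2q+1}$ is the coefficient of $x^{2q+1}$ in $(1+x)^{2p+2q+2}$.

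For the first identity, I would then expand the alternative form
$$(1+2x+x^2)^{p+q} \;=\; \sum_{a+b+c=p+q}\binom{p+q}{a,b,c}\,2^{b}\,x^{b+2c}$$
via the multinomial theorem and collect the coefficient of $x^{2q}$. Forcing $b+2c=2q$ together with $a+b+c=p+q$ makes $b$ even; setting $b=2k$ uniquely determines $c=q-k$ and $a=p-k$. The nonnegativity constraints $a,b,c\ge 0$ reduce to $0\le k\le q$, where the hypothesis $p\ge q$ is precisely what guarantees $a=p-k\ge 0$ throughout this range. Substituting back and using the symmetry of the multinomial coefficient in its three lower entries recovers the stated right-hand side $\sum_{k=0}^{q}\binom{p+q}{p-k,q-k,2k}2^{2k}$.

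The second identity follows by the identical template applied to $(1+2x+x^2)^{p+q+1}$: extracting the coefficient of $x^{2q+1}$ forces $b+2c$ to be odd, so I would parametrize $b=2k+1$, which yields $c=q-k$ and $a=p-k$, again with $0\le k\le q$ by the same nonnegativity analysis. The resulting summand is $\binom{p+q+1}{p-k,q-k,2k+1}2^{2k+1}$, matching the stated right-hand side. I do not anticipate any serious obstacle: the argument is a mechanical bookkeeping of indices, and the only point worth double-checking is that the parameterization $k\in\{0,\dots,q\}$ exhausts exactly the nonnegative integer solutions to the linear constraints, which is where the assumption $p\ge q$ is used.
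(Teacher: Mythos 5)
Your proof is correct, and it takes a different route from the paper's. You prove both identities by coefficient extraction: writing the left-hand sides as the binomial coefficients $\binom{2p+2q}{2q}$ and $\binom{2p+2q+2}{2q+1}$, reading them off as coefficients in $(1+x)^{2(p+q)}$ and $(1+x)^{2(p+q+1)}$, and then re-expanding via $(1+x)^{2m}=(1+2x+x^2)^m$ with the multinomial theorem; the parity of $b$ in $b+2c=2q$ (resp.\ $2q+1$) forces $b=2k$ (resp.\ $b=2k+1$), and the nonnegativity constraints, using $p\ge q$, give exactly the range $0\le k\le q$. The paper instead gives a direct double-counting argument: it counts distributions of $2p+2q$ labelled balls into a box of size $2p$ and a box of size $2q$, first grouping the balls into $p+q$ fixed pairs and classifying by the number $2k$ of ``split'' pairs contributing one ball to each box, each such pair having $2$ choices, which yields the same summand $\binom{p+q}{p-k,q-k,2k}2^{2k}$. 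The two arguments encode the same structure---your per-pair factor $1+2x+x^2$ is precisely the paper's case analysis (both balls in the first box, one ball in the second with two choices, both in the second)---but your generating-function version makes the index bookkeeping and the parity constraint automatic, while the paper's bijective count is more self-contained and avoids invoking formal power series. Either proof is complete and rigorous as stated.
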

\begin{proof}
	We prove the identities by showing that the LHS and RHS correspond to two equivalent ways of counting the same quantity. For the first identity, consider a set of $2p+2q$ balls $b_i$ each with a unique index $i = 1,\dotsc,2p+2q$, we count how many ways we can put them into boxes $B_1$ and $B_2$, such that $B_1$ has $2p$ balls and $B_2$ has $2q$ balls. The LHS is obviously a correct count. To get the RHS, note that we can first put balls in pairs, then decide what to do with each pair. Specifically, there are $p+q$ pairs $\{b_{2i-1},b_{2i}\}$, and we can partition the counts by the number of pairs of which we put one of the two balls in $B_2$. Note that this number must be even. If there are $2k$ such pairs, which gives us $2k$ balls in $B_2$, we still need to choose $2(q-k)$ pairs of which both balls are put in $B_2$, and the left are $p-k$ pairs of which both balls are put in $B_1$. The total number of counts given $k$ is thus 
	\[ \binom{p+q}{p-k, q-k, 2k}2^{2k} \]
	because we can choose either ball in each of the $2k$ pairs leading to $2^{2k}$ possible choices. Summing over $k$ we get the RHS. Hence the LHS and the RHS equal. The second identity can be proved with essentially the same argument.
\end{proof}


\section{Proof of Theorem \ref{thm:convergence-induction}} \label{prf:convergence-induction}
\begin{proof} 
	\emph{The base case.} First we verify that $y_0, y_1$ is sufficiently close to $x^*$ so that the comparison inequality (\ref{eq:base-change-assumption}) holds at step $k=0$. In fact, since $y_0=x_0$ by construction, we have 
	\begin{equation} \label{eq:xstar-y0-}
	\|\Exp_{y_0}^{-1}(x^*)\| =\|\Exp_{x_0}^{-1}(x^*)\| \le  \frac{1}{4\sqrt{K}}, \qquad 5K\|\Exp_{y_0}^{-1}(x^*)\|^2 \le \frac{1}{80}\left(\frac{\mu}{L}\right)^{\frac{3}{2}} \le  \beta.
	\end{equation}
	To bound $\|\Exp_{y_1}^{-1}(x^*)\|$, observe that $y_1$ is on the geodesic between $x_1$ and $v_1$. So first we bound $\|\Exp_{x_1}^{-1}(x^*)\|$ and $\|\Exp_{v_1}^{-1}(x^*)\|$. Bound on $\|\Exp_{x_1}^{-1}(x^*)\|$ comes from strong g-convexity:
	\begin{align*}
	\|\Exp_{x_1}^{-1}(x^*)\|^2\le &~ \frac{2}{\mu}(f(x_1)-f(x^*))\le \frac{2}{\mu}(f(x_0)-f(x^*))+\frac{\gamma}{\mu}\|\Exp_{x_0}^{-1}(x^*)\|^2 \\
	\le &~ \frac{L+\gamma}{\mu}\|\Exp_{x_0}^{-1}(x^*)\|^2, 
	\end{align*}
	whereas bound on $\|\Exp_{v_1}^{-1}(x^*)\|$ utilizes the tangent space distance comparison theorem. First, from the definition of $\overline{\Phi}_1$ we have
	$$\|\Exp_{y_0}^{-1}(x^*)-\Exp_{y_0}^{-1}(v_1)\|^2 = \frac{2}{\gamma}(\overline{\Phi}_1(x^*)-\Phi_1^*)\le \frac{2}{\gamma}(\Phi_0(x^*)-f(x^*))\le \frac{L+\gamma}{\gamma}\|\Exp_{x_0}^{-1}(x^*)\|^2$$
	Then note that (\ref{eq:xstar-y0-}) implies that the assumption in Theorem \ref{thm:squared-distance-ratio-bound} is satisfied when $k=0$, thus we have
	$$\|\Exp_{v_1}^{-1}(x^*)\|^2\le  (1+\beta)\|\Exp_{y_0}^{-1}(x^*)-\Exp_{y_0}^{-1}(v_1)\|^2\le \frac{2(L+\gamma)}{\gamma}\|\Exp_{x_0}^{-1}(x^*)\|^2.$$
	Together we have 
	\begin{align}
	\nonumber\|\Exp_{y_1}^{-1}(x^*)\|\le ~& \|\Exp_{x_1}^{-1}(x^*)\| + \frac{\alpha\gamma}{\gamma+\alpha\mu}\|\Exp_{x_1}^{-1}(v_1)\|\\  \nonumber\le ~& \|\Exp_{x_1}^{-1}(x^*)\| + \frac{\alpha\gamma}{\gamma+\alpha\mu}\left(\|\Exp_{x_1}^{-1}(x^*)\| + \|\Exp_{v_1}^{-1}(x^*)\|\right) \\
	\nonumber\le ~& \sqrt{\frac{L+\gamma}{\mu}}\|\Exp_{x_0}^{-1}(x^*)\| + \frac{\alpha\gamma}{\gamma+\alpha\mu}\left(\sqrt{\frac{L+\gamma}{\mu}}+\sqrt{\frac{2(L+\gamma)}{\mu}}\right)\|\Exp_{x_0}^{-1}(x^*)\| \\
	\nonumber\le ~& \left(1 + \frac{1+\sqrt{2}}{2}\right)\sqrt{\frac{L+\gamma}{\mu}}\|\Exp_{x_0}^{-1}(x^*)\| \\
	\le ~& \frac{1}{10\sqrt{K}}\left(\frac{\mu}{L}\right)^{\frac{1}{4}} 
	\le \frac{1}{4\sqrt{K}} \label{eq:base-xstar-y-}
	\end{align}
	which also implies
	\begin{equation}
	5K\|\Exp_{y_1}^{-1}(x^*)\|^2 \le \frac{1}{20}\sqrt{\frac{\mu}{L}} \le \beta \label{eq:base-beta-}
	\end{equation}
	By (\ref{eq:base-xstar-y-}), (\ref{eq:base-beta-}) and Theorem \ref{thm:squared-distance-ratio-bound} it is hence guaranteed that 
	$$\gamma \| \Exp_{y_1}^{-1}(x^*) -\Exp_{y_1}^{-1}(v_1)\|^2 \le \overline{\gamma} \|\Exp_{y_0}^{-1}(x^*)-\Exp_{y_0}^{-1}(v_1)\|^2.$$
	\emph{The inductive step.}
	Assume that for $i=0,\dots,k-1$, (\ref{eq:base-change-assumption}) hold simultaneously, i.e.:
	$$\gamma \| \Exp_{y_{i+1}}^{-1}(x^*) -\Exp_{y_{i+1}}^{-1}(v_{i+1})\|^2 \le \overline{\gamma}\|\Exp_{y_i}^{-1}(x^*)-\Exp_{y_i}^{-1}(v_{i+1})\|^2, \forall i=0,\dots,k-1$$
	and also that $\|\Exp_{y_k}^{-1}(x^*)\|\le \frac{1}{10\sqrt{K}}\left(\frac{\mu}{L}\right)^{\frac{1}{4}}$. 
	To bound $\|\Exp_{y_{k+1}}^{-1}(x^*)\|$, observe that $y_{k+1}$ is on the geodesic between $x_{k+1}$ and $v_{k+1}$. So first we bound $\|\Exp_{x_{k+1}}^{-1}(x^*)\|$ and $\|\Exp_{v_{k+1}}^{-1}(x^*)\|$.
	Note that due to the sequential nature of the algorithm, statements about any step only depend on its previous steps, but not any step afterwards. 
	Since (\ref{eq:base-change-assumption}) hold for steps $i=0,\dots,k-1$, the analysis in the previous section already applies for steps $i=0,\dots,k-1$. Therefore by Theorem  \ref{thm:main-theorem-general-scheme} and the proof of Lemma \ref{thm:x-k-bound} we know 
	\begin{align*}
	f(x^*)\le &~ f(x_{k+1})\le\Phi_{k+1}^*\le\Phi_{k+1}(x^*)
	\le f(x^*)+(1-\alpha)^{k+1}(\Phi_0(x^*)-f(x^*)) \\
	\le &~ \Phi_0(x^*) = f(x_0)+\frac{\gamma}{2}\|\Exp_{x_0}^{-1}(x^*)\|^2
	\end{align*}
	Hence we get $f(x_{k+1})-f(x^*)\le\Phi_0(x^*)-f(x^*)$ and $\frac{\gamma}{2}\|\Exp_{y_k}^{-1}(x^*)-\Exp_{y_k}^{-1}(v_{k+1})\|^2\equiv\overline{\Phi}_{k+1}(x^*)-\Phi_{k+1}^*\le\Phi_0(x^*)-f(x^*)$. Bound on $\|\Exp_{x_{k+1}}^{-1}(x^*)\|$ comes from strong g-convexity:
	\begin{align*}
	\|\Exp_{x_{k+1}}^{-1}(x^*)\|^2\le &~ \frac{2}{\mu}(f(x_{k+1})-f(x^*))\le \frac{2}{\mu}(f(x_0)-f(x^*))+\frac{\gamma}{\mu}\|\Exp_{x_0}^{-1}(x^*)\|^2 \\
	\le &~ \frac{L+\gamma}{\mu}\|\Exp_{x_0}^{-1}(x^*)\|^2, 
	\end{align*} 
	whereas bound on $\|\Exp_{v_{k+1}}^{-1}(x^*)\|$ utilizes the tangent space distance comparison theorem. First, from the definition of $\overline{\Phi}_{k+1}$ we have
	$$\|\Exp_{y_k}^{-1}(x^*)-\Exp_{y_k}^{-1}(v_{k+1})\|^2 = \frac{2}{\gamma}(\overline{\Phi}_{k+1}(x^*)-\Phi_{k+1}^*)\le \frac{2}{\gamma}(\Phi_0(x^*)-f(x^*))\le \frac{L+\gamma}{\gamma}\|\Exp_{x_0}^{-1}(x^*)\|^2$$
	Then note that the inductive hypothesis implies that
	\begin{align*}
	\|\Exp_{v_{k+1}}^{-1}(x^*)\|^2\le (1+\beta)\|\Exp_{y_k}^{-1}(x^*)-\Exp_{y_k}^{-1}(v_{k+1})\|^2\le \frac{2(L+\gamma)}{\gamma}\|\Exp_{x_0}^{-1}(x^*)\|^2
	\end{align*}
	Together we have
	\begin{align*}
	\|\Exp_{y_{k+1}}^{-1}(x^*)\|\le ~& \|\Exp_{x_{k+1}}^{-1}(x^*)\| + \frac{\alpha\gamma}{\gamma+\alpha\mu}\|\Exp_{x_{k+1}}^{-1}(v_{k+1})\|\\  \le ~& \|\Exp_{x_{k+1}}^{-1}(x^*)\| + \frac{\alpha\gamma}{\gamma+\alpha\mu}\left(\|\Exp_{x_{k+1}}^{-1}(x^*)\| + \|\Exp_{v_{k+1}}^{-1}(x^*)\|\right) \\
	\le ~& \sqrt{\frac{L+\gamma}{\mu}}\|\Exp_{x_0}^{-1}(x^*)\| + \frac{\alpha\gamma}{\gamma+\alpha\mu}\left(\sqrt{\frac{L+\gamma}{\mu}}+\sqrt{\frac{2(L+\gamma)}{\mu}}\right)\|\Exp_{x_0}^{-1}(x^*)\| \\
	\le ~& \left(1 + \frac{1+\sqrt{2}}{2}\right)\sqrt{\frac{L+\gamma}{\mu}}\|\Exp_{x_0}^{-1}(x^*)\| \\
	\le ~& \frac{1}{10\sqrt{K}}\left(\frac{\mu}{L}\right)^{\frac{1}{4}} 
	\le \frac{1}{4\sqrt{K}}
	\end{align*}
	which also implies that
	$$ 5K\|\Exp_{y_{k+1}}^{-1}(x^*)\|^2 \le \frac{1}{20}\sqrt{\frac{\mu}{L}} \le \beta$$
	By the two lines of equations above and Theorem \ref{thm:squared-distance-ratio-bound} it is guaranteed that $\|\Exp_{y_{k+1}}^{-1}(x^*)\|\le \frac{1}{10\sqrt{K}}\left(\frac{\mu}{L}\right)^{\frac{1}{4}}$ and also
	$$\gamma \| \Exp_{y_{k+1}}^{-1}(x^*) -\Exp_{y_{k+1}}^{-1}(v_{k+1})\|^2 \le \overline{\gamma} \|\Exp_{y_k}^{-1}(x^*)-\Exp_{y_k}^{-1}(v_{k+1})\|^2.$$
	i.e. (\ref{eq:base-change-assumption}) hold for $i=0,\dots,k$. This concludes the inductive step.\\
	By induction, (\ref{eq:base-change-assumption}) hold for all $k\ge 0$, hence by Theorem \ref{thm:main-theorem-general-scheme}, Algorithm \ref{alg:constant-step} converges, with
	$$\alpha_i\equiv \alpha=\frac{\sqrt{\beta^2+4(1+\beta)\mu h}-\beta}{2} = \frac{\sqrt{\mu h}}{2}\left(\sqrt{\frac{1}{25}+4\left(1+\frac{\sqrt{\mu h}}{5}\right)} - \frac{1}{5}\right)\ge \frac{9}{10}\sqrt{\frac{\mu}{L}}.$$
\end{proof}

\end{document}